\numberwithin{equation}{section}
\newtheorem{teo}{Theorem }[section]
\newtheorem{lem}[teo]{Lemma}
\newtheorem{prop}[teo]{Proposition}
\newtheorem{rem}{Remark}
\def \e{e^{2bx}}
\begin{document}


\title[KAWAHARA-BURGERS EQUATION]
      { KAWAHARA-BURGERS EQUATION ON A STRIP} 
      
\author[
\ N.~A. Larkin]
{
\ N.~A. Larkin
\bigskip
\\
{\tiny
Departamento de Matem\'atica,\\
Universidade Estadual de Maring\'a,\\
87020-900, Maring\'a - PR, Brazil,\\
 \email{  nlarkine@uem.br}
}
}

 \address
 {
 Departamento de Matem\'atica\\
 Universidade Estadual de Maring\'a\\
 87020-900, Maring\'a - PR, Brazil.
 }
 \email{  nlarkine@uem.br}
 \date{}
\thanks{MSC2010 35Q53;35B35\\
keywords: Kawahara-Burgers equation , Dispersive equations, Exponential
Decay}

\begin{abstract}
An initial-boundary value problem for the  2D
Kawahara-Burgers equation posed  on a channel-type strip
was considered. The existence and uniqueness results for regular and
weak  solutions in weighted spaces as well as exponential decay of
small solutions without restrictions on the width of a strip were
proven both for regular solutions in an elevated norm and for weak
solutions in  the $L^2$-norm.
\end{abstract}

\maketitle

\section{Introduction}\label{introduction}

We are concerned with an initial-boundary value problem (IBVP) for
the two-dimensional Kawahara-Burgers (KB) equation
\begin{equation}\label{kb}
u_t+u_x-u_{xx}+uu_x +u_{xxx}+u_{xyy}-\partial_x^5 u=0\
\end{equation}
posed on a strip modeling an infinite channel
$\{(x,y)\in\mathbb{R}^2:\ x\in \mathbb{R},\,y \in (0,B), \, B>0\}.$
This equation is a two-dimensional analog of the  Kawahara type
 equation
\begin{equation}\label{k}
u_t-\partial_x^5 u +F(u,u_x,u_{xx},u_{xxx})=0
\end{equation}
 which includes dissipation and dispersion and has been
studied intensively last years  due to its applications in Mechanics
and Physics \cite{bona1,benney, biag,bona2, chen,chile,cui,linponce,kwon}.

 Equations \eqref{kb} and \eqref{k} are typical
examples of so-called dispersive equations which attract
considerable attention of both pure and applied mathematicians in
the past decades.
 The theory of the Cauchy problem for (1.2) and other dispersive
equations like the KdV equation has been extensively studied and is
considerably advanced today
\cite{bona1,biag,bona2,chile,chen,cui,kato,kato1,linponce,ponce1,ponce2,kwon,ponce3,saut2}. Results on IBVPs  for one-dimensional dispersive equations both in
bounded and unbounded domains may be found in
\cite{bona2,chile,doronin2,doronin3,kuvsh,larkin2,lar-dor}. It was
shown in \cite{doronin2,doronin3,larkin,larkin1,larkinH1,marcio} that the KdV and Kawahara
equations have an implicit internal dissipation. This allowed the
proof of exponential decay of small solutions in bounded domains
without adding any artificial damping term. Later, this effect has been
proven for a wide class of dispersive equations of any odd order
with one space variable \cite{familark}.
 \par On the other hand, it has been shown in \cite{rozan}
that control of the linear KdV equation with the  transport
term $u_x$  may fail for critical domains, but it is possible to eliminate the term $u_x$ by simple scaling  when the KdV and Kawahara equations are posed on the whole line. The same is true also for \eqref{kb} posed on a strip $(y\in(0,B),\,x\in\mathbb{R},\,t>0 )$ \cite{strip}.

Recently, interest on
dispersive equations became to be extended to  multi-dimensional
models such as Kadomtsev-Petviashvili (KP),\\
 Zakharov-Kuznetsov
(ZK) equations \cite{zk} and dispersive equations of higher orders \cite{phys}. As far as the ZK equation and its
generalizations are concerned, the results on IVPs
 can be found in
\cite{fam1,fara,pastor1,saut,ribaud} and IBVPs
were studied in \cite{fambayk,fam2,fam3,larkin1,h-strip,saut3}.
It was shown that IBVP for the ZK equation posed on
a half-strip unbounded in $x$ direction with the Dirichlet
conditions on the boundaries  possesses regular solutions which
decay exponentially as $t\to \infty$ provided initial data are
sufficiently small and the width of a half-strip is not too large \cite{larkinH1,h-strip}.
The similar result was established for the 2D Kawahara equation posed on a half-strip \cite{larkin1}.
This means that multi-dimensional dispersive equation may create an internal dissipative
mechanism for some types of IBVPs. \par
 The goal of our note is to prove
that the KB equation on a strip also may create a dissipative
effect without adding any artificial damping. We must mention that
IBVP for the ZK equation on a strip $(x\in(0,1),\,y\in\mathbb{R})$
has been studied in \cite{pems,saut3} and IBVPs on a strip
$(y\in(0,L),\,x\in \mathbb{R})$  for the ZK equation and Zakharov-Kuzetsov-Burgers equation were considered
in \cite{fambayk,strip} and for the ZK equation with some internal damping
in \cite{fam3}. In the domain $(y\in(0,B),\,x\in\mathbb{R},\,t>0 )$,
the term $u_x$ in \eqref{kb} can be  scaled out by a simple change
of variables. Nevertheless, it can not be safely ignored for
problems posed  both on finite and semi-infinite intervals as well
as on infinite in $y$ direction bands without changes in the
original domain \cite{pems,rozan}.

The main results of our paper are the existence  and uniqueness of
regular and weak  global-in-time solutions for  \eqref{kb} posed
on a strip with the Dirichlet boundary conditions  and the
exponential decay rate of these solutions as well as continuous
dependence on initial data. To explore dissipativity of the term $u_{xyy},$ we used exponential weight $\e$ which implied to define solutions of \eqref{kb} as the product
$$ e^{bx}[u_t-u_{xx}+uu_x+u_{xxx}+u_{xyy}-\partial_x^5 u]=0\quad \text{in}\quad L^2(\mathcal{S}).$$ 
We must mention that this idea has been proposed yearlier in \cite{kato}.

The paper has the following structure. Section 1 is Introduction.
Section \ref{problem} contains formulation of the problem. In
Section \ref{regexist}, we prove  global existence and uniqueness
theorems for regular solutions in some weighted spaces and
continuous dependence on initial data. Surprisingly, we did not succeed to prove global existence for all
positive weights $e^{2bx}$ as in \cite{larkinH1,h-strip} and  imposed a
restriction $6-40b^2\geq0.$ In Section \ref{regdecay}, we
prove exponential decay of small regular solutions in an elevated
norm. In Section
\ref{weak},
 we prove the existence, uniqueness and continuous dependence on initial data for weak solutions as well
  as the exponential decay rate of the $L^2(\mathcal{S})$-norm for small solutions
without limitations on the width of the strip.

\section{Problem and preliminaries}\label{problem}

Let $B,T,r$ be finite positive numbers. Define
$\mathcal{S}=\{(x,y)\in\mathbb{R}^2:\ x\in\mathbb{R},\ y\in(0,B)\};$
$\mathcal{S}_r=\{(x,y)\in\mathbb{R}^2:\ x\in (-r,+\infty),\,
y\in(0,B)\}$ and $\mathcal{S}_T=\mathcal{S}\times (0,T).$

Hereafter subscripts $u_x,\ u_{xy},$ etc. denote the partial
derivatives, as well as $\partial_x$ or $\partial_{xy}^2$ when it is
convenient. Operators $\nabla$ and $\Delta$ are the gradient and
Laplacian acting over $\mathcal{S}.$ By $(\cdot,\cdot)$ and
$\|\cdot\|$ we denote the inner product and the norm in
$L^2(\mathcal{S}),$ and $\|\cdot\|_{H^k}$ stands for norms in the
$L^2$-based Sobolev spaces. We will use also the spaces $H^s\cap
L^2_b$, where $L^2_b=L^2(e^{2bx}dx)$, see \cite{kato}.

Consider the following  IBVP:
\begin{align}
&Lu\equiv u_t-u_{xx}+uu_x+u_{xxx}+u_{xyy}-\partial^5_x u=0,\ \ \text{in}\
\mathcal{S}_T; \label{2.1}
\\
&u(x,0,t)= u(x,B,t)=0,\; x\in \mathbb{R},\ t>0; \label{2.2}
\\
&u(x,y,0)=u_0(x,y),\ \ (x,y)\in\mathcal{S}. \label{2.3}
\end{align}

\section{Existence of regular solutions}\label{regexist}

{\bf Approximate solutions}. We will construct  solutions to
\eqref{2.1}-\eqref{2.3} by the Faedo-Galerkin method: let ${w_j(y)}$
be orthonormal in $L^2(\mathcal{S})$ eigenfunctions of the following
Dirichlet problem:
\begin{align}
&w_{jyy}+\lambda_j w_j=0, \,y\in (0,B); \label{2.4} \\
&w_j(0)=w_j(B)=0. \label{2.5}
\end{align}

Define approximate solutions of \eqref{2.1}-\eqref{2.3} as follows:
\begin{equation}
 u^N(x,y,t)=\sum^N_{j=1} w_j(y)g_j(x,t), \label{UN}
\end{equation}
where $g_j(x,t)$ are solutions to the following Cauchy problem for
the system of $N$ generalized Kawahara equations:
\begin{align}
&\frac{\partial}{\partial t}g_j(x,t)+\frac{\partial^3}{\partial
x^3}g_j(x,t)-\frac{\partial^2}{\partial x^2}g_j(x,t)-\lambda_j\frac{\partial}{\partial x} g_{j}(x,t) \nonumber\\
&-\frac{\partial^5}{\partial x^5} g_j(x,t)+\int^B_0 u^N(x,y,t)u^N_x(x,y,t)w_j(y)\, dy
=0, \label{2.6}\\
&g_j(x,0)=\int^B_0 w_j(y)u_0(x,y)\,dy,\;j=1,...,N. \label{2.7}
\end{align}
It can be shown that for $g_j(x,0)\in H^s,\,s\geq 5,$ the Cauchy problem
\eqref{2.6}-\eqref{2.7} has a unique regular solution $g_j\in
L^{\infty}(0,T;H^s(\mathcal{S})\cap L^2_b(\mathcal{S}))\cap
L^2(0,T;H^{s+2}(\mathcal{S})\cap L^2_b(\mathcal{S}))$
\cite{bona1,linponce,kato,ponce3}.
 To prove the existence of  global solutions for
 \eqref{2.1}-\eqref{2.3}, we need uniform in $N$  global in $t$
 estimates of approximate solutions $u^N(x,y,t).$\\
 {\bf  Estimate I.} Multiply the j-th equation of \eqref{2.6} by
 $g_j$, sum up over $j=1,...,N$ and integrate the result with respect
 to $x$ over $\mathbb{R}$ to obtain
 $$\frac{d}{dt}\|u^N\|^2(t)+2\|u^N_x\|^2(t)= 0\nonumber$$
 which implies
 \begin{equation}
\|u^N\|^2(t) +2\int_0^t\|u^N_x\|^2(s)\,ds=\|u^N_0\|^2\quad \forall t
\in (0,T). \label{FE}
\end{equation}

It follows from here that for $N$ sufficiently large and $\forall
t>0$

\begin{equation}
\|u^N\|^2(t)+2\int^t_0\ \|u^N_{x}\|^2(s)\,ds =\|u^N\|^2(0)\leq
2\|u_0\|^2. \label{E1}
\end{equation}
In our calculations we will drop the index $N$ where it is not
ambiguous.

{\bf  Estimate II.} For some positive $b$, multiply the j-th
equation of \eqref{2.6} by $e^{2bx}g_j$ , sum up over $j=1,...,N$
and integrate the result with respect  to $x$ over $\mathbb{R}.$
Dropping the index $N$, we get
\begin{align}
&\frac{d}{dt}(e^{2bx},u^2)(t)+(2+6b-40b^3)(e^{2bx},u^2_x)(t)+2b(e^{2bx},u^2_y)(t)\nonumber\\
&+10b(\e,u_{xx}^2)(t)-\frac{4b}{3}(e^{2bx},u^3)(t)\nonumber\\ &-(4b^2+8b^3-32b^5)(e^{2bx},u^2)(t)=0.
\label{2e}
\end{align}
\begin{prop}
Let $b\in(0,\frac{\sqrt{0,6¨}}{2}]$, then
\begin{equation} \label{bcond}
6b-40b^3\geq 0.
\end{equation}
\end{prop}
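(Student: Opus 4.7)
The plan is to reduce this to a one-variable polynomial inequality by factoring. Writing
\[
6b - 40b^3 = 2b\bigl(3 - 20b^2\bigr),
\]
I observe that for $b > 0$ the first factor $2b$ is strictly positive, so the sign of the whole expression is controlled entirely by the quadratic factor $3 - 20b^2$.

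Next, I translate the hypothesis $b \in \bigl(0, \tfrac{\sqrt{0.6}}{2}\bigr]$ into an inequality for $b^2$: squaring gives
\[
b^2 \leq \frac{0.6}{4} = 0.15 = \frac{3}{20},
\]
which is exactly $20b^2 \leq 3$, i.e.\ $3 - 20b^2 \geq 0$. Multiplying by the positive quantity $2b$ yields the claim.

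There is no real obstacle: the proposition is just the elementary equivalence $6b - 40b^3 \geq 0 \iff b^2 \leq 3/20$ for $b>0$, together with the observation that $\sqrt{3/20} = \sqrt{0.6}/2$, which is precisely the endpoint of the interval stated in the hypothesis.
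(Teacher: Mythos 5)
Your proof is correct; the paper simply declares the proof obvious, and your factorization $6b-40b^3=2b(3-20b^2)$ together with $b^2\le 0.6/4=3/20$ is exactly the elementary computation being left to the reader. Nothing further is needed.
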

The proof is obvious.

In our calculations, we will frequently use the following
multiplicative inequalities \cite{lady2}:
\begin{prop} \label{GN}
i) For all $u \in H^1(\mathbb{R}^2)$
\begin{equation} {\|u\|}_{L^4(\mathbb{R}^2)}^2 \leq 2 {\|u\|}_{L^2(\mathbb{R}^2)}{\|\nabla u\|}_{L^2(\mathbb{R}^2)}.
 \label{p1}
\end{equation}
\qquad \qquad \qquad \qquad ii) For all $u \in H^1(D)$
\begin{equation} {\|u\|}_{L^4(D)}^2 \leq C_D {\|u\|}_{L^2(D)}{\|u\|}_{H^1(D)}, \label{p2}
\end{equation}
where the constant $C_D$ depends on a way of continuation of $u \in
 H^1(D)$ as $ \tilde{u}(\mathbb{R}^2)$ such that $\tilde{u}(D)=u(D).$
\end{prop}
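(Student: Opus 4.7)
The plan is to prove (i) by the classical Ladyzhenskaya one--dimensional slicing argument and then deduce (ii) by extension to $\mathbb{R}^2$. By density it suffices to prove (i) for $u\in C_c^\infty(\mathbb{R}^2)$, since both sides are continuous in the $H^1$-norm. First I would write, for such $u$,
\begin{equation*}
u^2(x,y) = 2\int_{-\infty}^{x} u(s,y)\,u_x(s,y)\,ds \;\leq\; 2\int_{\mathbb{R}} |u(s,y)u_x(s,y)|\,ds =: 2\phi(y),
\end{equation*}
and symmetrically $u^2(x,y)\leq 2\psi(x)$ with $\psi(x)=\int_{\mathbb{R}}|u(x,t)u_y(x,t)|\,dt$.

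Multiplying the two bounds gives $u^4(x,y)\leq 4\phi(y)\psi(x)$, and integration over $\mathbb{R}^2$ with Fubini yields
\begin{equation*}
\|u\|_{L^4(\mathbb{R}^2)}^4 \leq 4\left(\int_{\mathbb{R}}\phi(y)\,dy\right)\!\left(\int_{\mathbb{R}}\psi(x)\,dx\right) = 4\,\|uu_x\|_{L^1(\mathbb{R}^2)}\,\|uu_y\|_{L^1(\mathbb{R}^2)}.
\end{equation*}
Cauchy--Schwarz then gives $\|uu_x\|_{L^1}\leq \|u\|_{L^2}\|u_x\|_{L^2}$ and likewise for $u_y$, so
\begin{equation*}
\|u\|_{L^4(\mathbb{R}^2)}^4 \leq 4\,\|u\|_{L^2(\mathbb{R}^2)}^2\,\|u_x\|_{L^2(\mathbb{R}^2)}\,\|u_y\|_{L^2(\mathbb{R}^2)} \leq 4\,\|u\|_{L^2(\mathbb{R}^2)}^2\,\|\nabla u\|_{L^2(\mathbb{R}^2)}^2,
\end{equation*}
where in the last step I use $\|u_x\|\,\|u_y\|\leq \|u_x\|^2+\|u_y\|^2 = \|\nabla u\|^2$ (a trivial bound, not AM--GM, so that the constant $4$ is preserved). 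Taking a square root gives (i). A standard approximation argument extends the estimate to all $u\in H^1(\mathbb{R}^2)$.

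For (ii) I would invoke an extension operator $E:H^1(D)\to H^1(\mathbb{R}^2)$ with $Eu\equiv u$ on $D$ and bounds $\|Eu\|_{L^2(\mathbb{R}^2)}\leq c_1\|u\|_{L^2(D)}$, $\|\nabla(Eu)\|_{L^2(\mathbb{R}^2)}\leq c_2\|u\|_{H^1(D)}$, valid for any sufficiently regular $D$ (e.g., Lipschitz). Setting $\tilde u = Eu$ and applying (i) to $\tilde u$ gives
\begin{equation*}
\|u\|_{L^4(D)}^2 \leq \|\tilde u\|_{L^4(\mathbb{R}^2)}^2 \leq 2\|\tilde u\|_{L^2(\mathbb{R}^2)}\|\nabla\tilde u\|_{L^2(\mathbb{R}^2)} \leq 2c_1 c_2\,\|u\|_{L^2(D)}\|u\|_{H^1(D)},
\end{equation*}
which is (ii) with $C_D = 2c_1c_2$. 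The explicit dependence of $C_D$ on the continuation, as mentioned in the statement, enters exactly through $c_1,c_2$.

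There are no genuine obstacles: (i) is a direct calculation once the slicing identity is set up, and (ii) is a formal consequence of any bounded linear extension from $H^1(D)$ to $H^1(\mathbb{R}^2)$. The only point requiring mild care is ensuring the extension satisfies the $L^2$-separately bound on $\tilde u$ (not just an $H^1$-bound), which is why the constant $C_D$ must depend on the chosen extension mechanism.
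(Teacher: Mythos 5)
Your proof is correct: the paper does not actually prove Proposition \ref{GN} but only cites the reference of Ladyzhenskaya, Solonnikov and Uraltseva, and your slicing argument (integrate $u^2=2\int_{-\infty}^{x}uu_s\,ds$ in each variable, multiply, apply Fubini and Cauchy--Schwarz, then extend to $H^1(D)$ by a bounded extension operator) is precisely the classical proof of that inequality. One minor remark: your parenthetical is backwards --- using $ab\le\tfrac12(a^2+b^2)$ instead of $ab\le a^2+b^2$ would \emph{sharpen} the final constant from $2$ to $\sqrt2$, but either bound yields the stated estimate, so nothing is lost.
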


Extending $u^N(x,y,t)$ for a fixed $t$ into the exterior of
$\mathcal{S}$ by 0 and exploiting 
\eqref{p1}, we find
 \begin{equation}
\frac{4b}{3}(e^{2bx}u^3)(t)\leq
b(e^{2bx},u^2_y)(t)\nonumber\\+2b(e^{2bx},u^2_x)(t)+2(b^3+\frac{8b}{9}\|u_0^N\|^2)(e^{2bx},u^2)(t).\label{nl}
\end{equation}
Substituting this into \eqref{2e}, we come to the inequality
\begin{align}
&\frac{d}{dt}(e^{2bx},u^2)(t)+(e^{2bx},u^2_x+u^2_y+u_{xx}^2)(t)\nonumber\\
&\leq C(b)(1+\|u_0\|^2)(e^{2bx},u^2)(t). \label{2.9}
\end{align}
By the Gronwall lemma,
$$(e^{2bx},u^2)(t)\leq C(b,T,\|u_o\|)(e^{2bx},u^2_0).\nonumber$$
Returning to \eqref{2.9} gives
\begin{align}
(e^{2bx},|u^N|^2)(t)+\int_0^t (e^{2bx},|\nabla u^N|^2+|u^N_{xx}|^2)(\tau )d
\tau\nonumber\\ \leq C(b,T,\|u_0\|)(e^{2bx},u^2_0)\quad \forall t
\in (0,T). \label{E2}
\end{align}
It follows from this estimate and (3.6) that uniformly in $N$ and
for any $r>0$ and $t \in (0,T)$
\begin{align}
 &\|u^N\|^2(t)+\int_0^t\int_0^B\int_{-r}^{+\infty}
[|\nabla u^N|^2+|u^N_{xx}|^2]\,dx\,dy\,ds\nonumber\\& \leq
\mathbb{C}(r,b,T,\|u_0\|)(e^{2bx},u_0^2), \label{H1}
\end{align}
 where
$\mathbb{C}$ does not depend on $N$.

Estimates \eqref{E2}, \eqref{H1} make it possible to prove the
existence of a weak solution to \eqref{2.1}-\eqref{2.3} passing to
the limit in \eqref{2.6} as $N \to \infty$. For details of passing
to the limit  in the nonlinear term  see \cite{kato}.\par
We will need the following lemma :
\begin{lem} \label{supr}
Let $u(x,y): \mathcal{S}\to \mathbb{R}$ be such that
$$ \int_{\mathcal{S}}\e [u^2(x,y)+|\nabla
u(x,y)|^2+u_{xy}^2(x,y)]\,dxdy < \infty$$ and for all
$x\in\mathbb{R}$ there is some $y_0\in [0,B]$ such that
$u(x,y_0)=0.$ Then
\begin{align}
&\sup_{\mathcal{S}}|e^{bx}u(x,y,t)|^2 \leq
\delta(1+2b^2)(\e,u_y^2)(t)+
2\delta(\e,u_{xy}^2)(t)\nonumber\\
&+\frac{2\delta_1}{\delta}(\e,
u_x^2)(t)+\frac{1}{\delta}\big[\frac{1}{\delta_1}+2\delta_1
b^2\big](\e,u^2)(t),\label{esup}
\end{align}
where $\delta, \delta_1$ are arbitrary positive numbers.
\end{lem}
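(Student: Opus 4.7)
The plan is to set $v(x,y) := e^{bx}u(x,y)$ and reduce the two-dimensional sup bound to a product of one-dimensional Sobolev embeddings. From the weighted integrability hypothesis and the identities $v_x = e^{bx}(bu + u_x)$, $v_{xy} = e^{bx}(bu_y + u_{xy})$ one has $v, v_x, v_y, v_{xy} \in L^2(\mathcal{S})$; moreover $\|v\|^2 = (\e, u^2)$, $\|v_y\|^2 = (\e, u_y^2)$, and
\begin{equation*}
\|v_x\|^2 \leq 2b^2(\e,u^2) + 2(\e,u_x^2), \qquad \|v_{xy}\|^2 \leq 2b^2(\e,u_y^2) + 2(\e,u_{xy}^2).
\end{equation*}

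First, fix $x$; the hypothesis produces some $y_0=y_0(x)\in [0,B]$ with $v(x,y_0)=0$, so $v^2(x,y) = 2\int_{y_0}^y v v_\eta\,d\eta$ and Cauchy--Schwarz in $\eta$ yield
\begin{equation*}
\sup_{y \in [0,B]} v^2(x,y) \leq 2\,\Phi(x)\,\Psi(x),
\end{equation*}
where $\Phi(x) := \|v(x,\cdot)\|_{L^2(0,B)}$ and $\Psi(x) := \|v_y(x,\cdot)\|_{L^2(0,B)}$. Second, $\Phi$ and $\Psi$ belong to $H^1(\mathbb{R})$ with $|\Phi'(x)| \leq \|v_x(x,\cdot)\|_{L^2(0,B)}$ and $|\Psi'(x)| \leq \|v_{xy}(x,\cdot)\|_{L^2(0,B)}$ (by Cauchy--Schwarz in $\eta$), so the sharp 1D embedding $\|f\|_\infty^2 \leq 2\|f\|_{L^2}\|f'\|_{L^2}$ together with Cauchy--Schwarz in $x$ gives $\sup_x \Phi^2 \leq 2\|v\|\|v_x\|$ and $\sup_x \Psi^2 \leq 2\|v_y\|\|v_{xy}\|$. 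Combining the two steps produces the four-factor bound
\begin{equation*}
\sup_{\mathcal{S}} v^2 \leq 4\,\|v\|^{1/2}\|v_x\|^{1/2}\|v_y\|^{1/2}\|v_{xy}\|^{1/2}.
\end{equation*}

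To finish, I would apply Young's inequality $4\sqrt{PQ} \leq \frac{2}{\delta}P + 2\delta Q$ to $P=\|v\|\|v_x\|$ and $Q=\|v_y\|\|v_{xy}\|$, then split the resulting products via $2\|v\|\|v_x\| \leq \|v\|^2/\delta_1 + \delta_1\|v_x\|^2$ and $2\|v_y\|\|v_{xy}\| \leq \|v_y\|^2 + \|v_{xy}\|^2$, and substitute the four norms of $v$ back in terms of the weighted norms of $u$; collecting coefficients reproduces \eqref{esup} exactly. The main delicate point is the \emph{two-parameter} Young split: $\delta$ scales the $u_y$- and $u_{xy}$-terms (which need to be small so they can later be absorbed by dissipative terms in the energy estimate), while the second parameter $\delta_1$ independently tunes the balance between the $u$- and $u_x$-terms; a single-parameter split would force $\delta_1=1$. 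The only remaining routine point is justifying that $\Phi, \Psi$ vanish at $\pm\infty$, which follows from $H^1(\mathbb{R}) \hookrightarrow C_0(\mathbb{R})$ applied to the $L^2(0,B)$-valued slice functions.
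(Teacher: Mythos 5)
Your proof is correct and takes essentially the same route as the paper: the paper's one-line proof simply asserts, for $v=e^{bx}u$, the intermediate bound $\sup_{\mathcal{S}} v^2\leq \delta[\|v_y\|^2+\|v_{xy}\|^2]+\delta^{-1}[\|v_x\|^2+\|v\|^2]$ (your four-factor estimate with $\delta_1=1$) and then "returns to $u$", whereas you supply the omitted details — the slice argument in $y$ using the zero at $y_0$, the 1D embedding in $x$ for $\Phi,\Psi$, and the two-parameter Young split. Your coefficient bookkeeping reproduces \eqref{esup} exactly.
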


\begin{proof} Denote $v=e^{bx}u.$ Then simple calculations give
\begin{align}
&\sup_{\mathcal{S}} v^2(x,y,t)\leq
\delta[\|v_y\|^2(t)+\|v_{xy}\|^2(t)] +\frac{1}{\delta}[\|v_x\|^2(t)+
\|v\|^2(t)].\nonumber
\end{align}
Returning to the function $u(x,y,t)$, we prove Lemma \ref{supr}
\end{proof}

{\bf  Estimate III.} Multiplying the j-th equation of \eqref{2.6} by
$-(\e g_{jx})_x$, and dropping the index $N$, we come to the
equality
\begin{align}
&\frac{d}{dt}(\e,
u_x^2)(t)+(2+6b-40b^3)(\e,u_{xx}^2)(t)+2b(\e,u_{xy}^2)(t)\nonumber\\
&+10b(\e,u_{xxx}^2)(t)-(4b^2+8b^3-32b^5)(\e,u_x^2)(t)\nonumber\\
&+(\e,u_x^3)(t)-2b(\e,uu_x^2)(t)=0.\label{e3}
\end{align}
Making use of Proposition \ref{GN}, we estimate
\begin{align}
&I_1=(\e,u_x^3)(t)\leq
\|u_x\|(t)\|e^{bx}u_x\|^2_{L^4(\mathcal{S})}(t\nonumber\\
&\leq 2\|u_x\|(t)\|e^{bx}u_x\|(t)\|\nabla(e^{bx} u_x)\|(t)\nonumber\\
&\leq \delta(\e,2u_{xx}^2+u_{xy}^2)(t)+2\big[\delta
b^2+\frac{\|u_x\|^2(t)}{2\delta}\big](\e,u_x^2)(t).\nonumber
\end{align}
Similarly,
\begin{align}
&I_2=2b(\e,uu_x^2)(t)\leq \delta(\e,2u_{xx}^2+u_{xy}^2)(t)\nonumber\\
&+\big[2b^2\delta+\frac{4b^2}{\delta}\|u_0\|^2(t)\big](\e,u_x^2)(t).\nonumber
\end{align}
Substituting $I_1,I_2$ into \eqref{e3} with $2\delta=b$, we
obtain for $\forall t\in(0,T):$
\begin{align}
&(\e,|u^N_x|^2)(t)+\int_0^t(\e,|\nabla
u^N_x|^2+|u^N_{xxx}|^2 )(s)\,ds\nonumber\\&\leq C(b,T,\|u_0\|)(\e,u_{0x}^2).
\label{E3}
\end{align}

{\bf  Estimate IV.} Multiplying the j-th equation of \eqref{2.6} by
$-2(\e\lambda_j g_{j})$, and dropping the index $N$, we come to the
equality
\begin{align}
&\frac{d}{dt}(\e,
u_y^2)(t)+(2+6b-40b^3)(\e,u_{xy}^2)(t)+2b(\e,u_{yy}^2)(t)\nonumber\\
&+10b(\e,u_{xxy}^2)(t)-(4b^2+8b^3-32b^5)(\e,u_y^2)(t)\nonumber\\
&+2(1-b)(\e,u_x u_y^2)(t)=0.\label{e4}
\end{align}
Making use of Proposition \ref{GN}, we estimate
\begin{align}
&I=2(1-b)(\e,u_x u_y^2)(t)\nonumber\\&\leq
2C_D(1+b)\|u_x\|(t)\|e^{bx}u_y\|(t)\|
(e^{bx}u_y)\|_{H^1(\mathcal{S})}(t)\nonumber\\
&\leq\delta(\e, 2u_{xy}^2+u_{yy}^2)(t)+\big[2\delta
(1+b^2)\nonumber\\&+\frac{C_D^2(1+b)^2\|u_x\|^2(t)}{\delta}\big](\e,u_y^2)(t).\nonumber
\end{align}
Taking $\delta=b,$ we transform \eqref{e4} into the inequality
\begin{align}
&\frac{d}{dt}(\e,
u_y^2)(t)+(\e,u_{xy}^2+u_{yy}^2+u_{xxy}^2)(t)\nonumber\\
&\leq C(b)[1+\|u_x\|(t)^2](\e,u_y^2)(t).\nonumber
\end{align}
Making use of \eqref{E1} and the Gronwall lemma, we get $\;\forall
t\in(0,T):$
\begin{align}
&(\e,|u^N_y|^2)(t)+\int_0^t(\e,|u^N_{yy}|^2+|u^N_{xy}|^2+|u^N_{xxy}|^2)(s)\,ds\nonumber\\\leq
&C(b,T,\|u_0\|)(\e,u_{0y}^2). \label{E4}
\end{align}

This and \eqref{E3} give for $\forall t\in( 0,T)$:
\begin{align}
&(\e,|\nabla u^N|^2)(t)+\int_0^t(\e,|\nabla u^N_x|^2+|\nabla u^N_{xx}|^2+| u^N_{yy}|^2)(s)\,ds\nonumber\\
&\leq C(b,T,\|u_0\|)|_0)(\e,|\nabla u_0|^2)\label{E5}
\end{align}

which imply that for all finite $r>0$ and all
$t\in(0,T)$
\begin{equation}
\|u^N\|(t)_{H^1(\mathcal{S}_r)}\leq C(r,b,T,\|u_0\|)(\e,|\nabla
u_0|^2). \label{Ert}
\end{equation}

{\bf  Estimate V.} Multiplying the j-th equation of \eqref{2.6} by
$(\e g_{jxx})_{xx}$, and dropping the index $N$, we come to the
equality
\begin{align}
&\frac{d}{dt}(\e,
u_{xx}^2)(t)+(2+6b-40b^3)(\e,u_{xxx}^2)(t)+2b(\e,u_{xxy}^2)(t)\nonumber\\
&+10b(\e,u^2_{xxxx})(t)-(4b^2+8b^3-32b^5)(\e,u_{xx}^2)(t)\nonumber\\&-2b(\e,u u_{xx}^2)(t)+5(\e
u_x,u_{xx}^2)(t)=0.\label{e5}
\end{align}

Using \eqref{p1}, we estimate
\begin{align}
&I=-2b(\e,u u_{xx}^2)(t)+5(\e u_x,u_{xx}^2)(t)\nonumber\\
&\leq
2\delta(\e,2u_{xxx}^2+u_{xxy}^2)(t)+\big[4b^2\delta+\frac{25}{\delta}\|u_x\|(t)^2\nonumber\\
&+\frac{4b^2}{\delta}\|u\|^2(t)\big](\e,u_{xx}^2)(t).\nonumber
\end{align}
Taking $2\delta=b$ and substituting $I$ into \eqref{e5}, we obtain
\begin{align}
&\frac{d}{dt}(\e,
u_{xx}^2)(t)+(\e,u_{xxx}^2+u_{xxy}^2+u_{xxxx}^2)(t)\nonumber\\
&\leq C(b)[1+\|u_x\|^2(t)+\|u\|^2(t)](\e,u_{xx}^2)(t).\nonumber
\end{align}
Making use of \eqref{E1}, we find
\begin{align}
&(\e,|u^N_{xx}|^2)(t)+\int_0^t(\e,|\nabla
u^N_{xx}|^2+|u^N_{xxxx}|^2)(s)\,ds\nonumber\\\leq
&C(b,T,\|u_0\|)(\e,u_{0xx}^2)\quad \forall t\in(0,T).\label{E5}
\end{align}

{\bf  Estimate VI.} Differentiate \eqref{2.6} by $t$ and multiply
the result by $\e g_{jt}$ to obtain

\begin{align}
&\frac{d}{dt}(\e,
u_t^2)(t)+(2+6b-40b^3)(\e,u_{xt}^2)(t)+2b(\e,u_{ty}^2)(t)\nonumber\\
&+10b(\e,u_{txx}^2)(t)-(4b^2+8b^3-32b^5)(\e,u_{t}^2)(t)\nonumber\\&+(2-2b)(\e u_x, u_{t}^2)(t)=0.\label{e6}
\end{align}
 Making use of \eqref{p1}, we estimate
 \begin{align}
 &I=(2-2b)(\e u_x, u_{t}^2)(t)\leq
 2(2+2b)\|u_x\|(t)\|e^{bx}u_t\|(t)\|\nabla(e^{bx}
 u_t)\|(t)\nonumber\\
 &\leq\delta(\e,2u_{xt}^2+u_{ty}^2)(t)+\big[2b^2\delta+\frac{(2+2b)^2\|u_x\|(t)^2}{\delta}\big](\e,u_t^2)(t).\nonumber
 \end{align}
 Taking $\delta=b$ and substituting $I$ into \eqref{e6}, we get

\begin{align}
&\frac{d}{dt}(\e,
u_t^2)(t)+(\e,u_{xt}^2+u_{ty}^2+u_{txx}^2)(t)\nonumber\\
&\leq C(b)[1+\|u_x\|(t)^2](\e , u_{t}^2)(t).\nonumber
\end{align}

This implies \quad $\forall t\in 0,T)$:
\begin{align}
(\e,|u^N_t|^2)(t)+\int_0^t(\e,|\nabla
u^N_{s}|^2+|u^N_{sxx}|^2)(s)\,ds\nonumber\\\leq C(b,T,\|u_0\|)(\e,u_{t}^2)(0)\leq
C(b,T,\|u_0\|)\|)J_0,\label{E6}
\end{align}
where
$$J_0=\|u_0\|^2+(\e,u^2_0+|\nabla u_0|^2+|\nabla u_{0x}|^2+u^2_0
u^2_{0x}+|\Delta u_{0x}|^2+|\partial_x^5 u_0|^2).$$

{\bf  Estimate VII.} Multiplying the j-th equation of \eqref{2.6} by
$-\e g_{jx}$ and dropping the index $N$, we come to the equality
\begin{align}
&(\e,[u_{xy}^2+u_{xxx}^2])(t)=(\e, uu_x^2)(t)+(\e u_{t},u_x)(t)\nonumber\\&+(8b^2-1)(\e,u_{xx}^2)(t)+(b+2b^2-8b^4)(\e,u^2_x)(t).
\label{e7}
\end{align}
Using \eqref{p1}, we estimate
\begin{align}
&I=(\e,uu_x^2)(t)\leq \delta(\e,2u_{xx}^2+u_{xy}^2)(t)+
\big[2b^2\delta+\frac{\|u_0\|^2}{\delta}\big](\e,u_x^2)(t).\nonumber
\end{align}
Taking $2\delta=1$, using \eqref{E3}-\eqref{E6} and substituting $I$
into \eqref{e7}, we get

\begin{equation}
(\e,{u^N_{xxx}}^2+{u^N_{xy}}^2)(t)\leq C(b,T,\|u_0\|)J_0 \quad
\forall t\in(0,T). \label{E7}
\end{equation}

{\bf  Estimate VIII.}

Multiplying the j-th equation of \eqref{2.6} by $\e g_{jxxx}$, we
come, dropping the index $N$, to the equality
\begin{align}
&(\e,
u_{xxy}^2+u_{xxxx}^2)(t)=-(\e[u_t-u_{xx}],u_{xxx})(t)-(\e
uu_x,u_{xxx})(t)\nonumber\\&+2b^2(\e,u_{xy}^2)(t)+(2b^2-1)(\e,u_{xxx}^2)(t).\label{e8}
\end{align}

Using Lemma \ref{supr} and \eqref{E1}, we estimate
\begin{align}
&I=(\e uu_x,u_{xxx})(t)\leq
\|u\|(t)\sup_{\mathcal{S}}|e^{bx}u_x(x,y,t)|\|e^{bx}u_{xxx}\|(t)\nonumber\\
&\leq
\frac{\|u_0\|^2}{2}(\e,u_{xxx}^2)(t)+\frac{1}{2}\big[\frac{1}{\delta}(1+2b^2)(\e,u_x^2)(t)\nonumber\\
&+\frac{2}{\delta}(\e
u_{xx}^2)(t)+\delta(1+2b^2)(\e,u_{xy}^2)(t)+2\delta(\e,u_{xxy}^2)(t)\big].
\end{align}

Taking  $\delta$ sufficiently small, positive and
substituting $I$ into \eqref{e8}, we find
\begin{equation}
(\e, |\nabla u^N_{xx}|^2+|\partial^4_x u^N|^2)(t)\leq C(b,T,\|u_0\|)J_0 \quad \forall
t\in(0,T). \label{E81}
\end{equation}

Consequently, it follows from the equalities:
$$ -(\e [u^N_t-u^N_{xx}+u^N_{xxx}+u^N_{xyy}+u^N
u^N_x-\partial^5_x u^N],u^N_{yy})(t)=0,
$$

$$ -(\e [u^N_t-u^N_{xx}+u^N_{xxx}+u^N_{xyy}+u^N
u^N_x-\partial^5_x u^N],\partial^5_x u^N)(t)=0
$$

and 

$$ (\e [u^N_t-u^N_{xx}+u^N_{xxx}+u^N_{xyy}+u^N
u^N_x-\partial^5_x u^N],u^N_{xyy})(t)=0
$$
 that
\begin{align}
&(\e, |u^N_{yy}|^2+|u^N_{xyy}|^2+|\partial^5_x u^N|^2+|u^N_{xxxy}|^2)(t)\nonumber\\&\leq C(b,T,\|u_0\|)J_0 \quad
\forall t\in(0,T). \label{E82}
\end{align}

Jointly, estimates \eqref{E3},\eqref{E4}, \eqref{E5},
\eqref{E7},\eqref{E81}, \eqref{E82} read
\begin{align}
&(\e, |u^N|^2+|\nabla u^N|^2 +|\nabla u^N_x|^2+|\nabla
u^N_y|^2+|\nabla u_{xx}^N|^2+|\Delta u^N_x|^2\nonumber\\&+|\nabla u^N_{xxx}|^2+|\partial_x^5 u^N|^2)(t)\leq
C(b,T,\|u_0\|)J_0\quad \forall t\in(0,T). \label{E83}
\end{align}
In other words,
\begin{align}
&e^{bx}u^N,\quad e^{bx}u^N_x\in L^{\infty}(0,T;H^2(\mathcal{S}))\nonumber\\
& \nabla u^N_{xxx},\quad \partial^5_x u^N \in L^{\infty}(0,T;L^2(\mathcal{S})) 
\label{E84}
\end{align}
and these inclusions are uniform in $N$.

{\bf  Estimate IX.}
 Multiplying the j-th equation of \eqref{2.6} by
$\e\lambda^2_j g_{j}$, we come, dropping the index $N$, to the equality
\begin{align}
&b(\e,5
u_{xxyy}^2+u_{yyy}^2)(t)=(2b^2+4b^3-16¨b^5)(\e,u_{yy}^2)(t)\nonumber\\&+(20b^3-3b-1)(\e
,u^2_{xyy})(t)+(\e,u_{ty},u_{yyy})(t)+(\e uu_{xy},u_{yyy})(t)\nonumber\\&+(\e u_y u_x,u_{yyy})(t).\label{e10}
                          \end{align}
We estimate
\begin{align}
& I_1=-(\e,u_{ty},u_{yyy})(t)\leq
\frac{\epsilon}{2}(\e,u_{yyy}^2)(t)+\frac{1}{2\epsilon}(\e,u_{yt}^2)(t),\nonumber\\
&I_2=(\e u_y
u_x,u_{yyy})(t)\leq\|u_x\|(t)\|e^{bx}u_{xyyy}\|(t)\sup_{\mathcal{S}}|e^{bx}u_y(x,y,t|\nonumber\\
&\leq
\frac{\epsilon}{2}(\e,u_{xyyy}^2)(t)+\frac{\|u_x\|(t)^2}{2\epsilon}\big[(1+2b^2)(\e,u_y^2)(t)\nonumber\\
&+2(\e,u_{xy}^2)(t)+(1+2b^2)(\e,u_{yy}^2)(t)
+2(\e,u_{xyy}^2)(t)\big],\nonumber\\
&I_3=(\e
uu_{xy},u_{yyy})(t)\leq\|u\|(t)\|e^{bx}u_{yyy})\|(t)\sup_{\mathcal{S}}|e^{bx}u_{xy}(x,y,t|\nonumber\\
&\leq
\frac{\|u_0\|^2\epsilon_1}{2}(\e,u_{yyy}^2)(t)+\frac{1}{2\epsilon_1}\big[ 2\delta(\e, u_{xxyy}^2)(t)\nonumber\\
&+\frac{2}{\delta}(\e,u_{xxy}^2)(t)+\delta(1+2b^2)(\e,u_{xyy}^2)(t)\nonumber\\
&+\frac{1}{\delta}(1+2b^2)(\e,u_{xy}^2)(t)\big].\nonumber
\end{align}
Choosing $\epsilon,\;\epsilon_1,\; \delta$ sufficiently small,
positive, after integration, we transform \eqref{e10}  into the form
\begin{equation}
\int_0^T(\e,[|u^N_{xxyy}|^2+|u^N_{yyy}|^2])(t)\,dt\leq
C(b,T,\|u_0\|)J_0. \label{E101}
\end{equation}
Acting similarly, we get from the scalar product
$$(\e\big[u^N_t-u^N_{xx}+u^N_{xxx}+u^N_{xyy}+u^N u^N_x-\partial_x^5 u^N\big],u^N_{xyyyy})(t)=0$$
the estimate
\begin{equation}
\int_0^T(\e,|u^N_{xyyy}|^2+|\partial_x^3 u^N_{yy}|^2)(t)\,dt \leq C(b,T,\|u_0\|)J_0.
\label{E102}
\end{equation}

 Estimates \eqref{E83}, \eqref{E84},  \eqref{E101},
\eqref{E102} guarantee that
\begin{equation}
e^{bx}u^N,\quad e^{bx}u^N_x\in L^{\infty}(0,T;H^2(\mathcal{S})\cap
L^2(0,T;H^3(\mathcal{S}))\label{EN}
\end{equation}
and these inclusions do not depend on $N.$ Independence of Estimates
\eqref{E1},\eqref{EN} of $N$ allow us to pass to the limit in
\eqref{2.6} and to prove the following result:
\begin{teo} \label{regsol}
Let $u_0(x,y):\mathbb{R}^2\to \mathbb{R}$ be such that
$u_0(x,0)=u_0(x,B)=0$ and for some $b>0$ satisfying \eqref{bcond}

$$J_0= \|u_0\|^2+(\e,u_0^2+|\nabla u_0|^2+|\nabla
u_{0x}|^2+u_0^2 u_{0x}^2 + |\Delta u_{0x}|^2+|\partial_x^5 u_0|^2) < \infty.$$
 Then there exists a regular solution to
\eqref{2.1}-\eqref{2.3} $u(x,y,t):$
\begin{align}
& u\in L^{\infty}(0,T;L^2(\mathcal{S})),\quad u_x\in
 L^2(0,T;L^2(\mathcal{S}))\nonumber\\
 &e^{bx}u,\;e^{bx}u_x \in L^{\infty}(0,T;H^2(\mathcal{S}))\cap
 L^2(0,T;H^3(\mathcal{S}))\nonumber\\
 &e^{bx}u_t \in L^{\infty}(0,T;(L^2(\mathcal{S})))\cap
 L^2(0,T;H^1(\mathcal{S})),\nonumber\\
 &e^{bx}u_{xxt}\in L^2(0,T;L^2(\mathcal{S})),\quad e^{bx}\partial_x^5 u\in L^2(0,T;H^1(\mathcal{S}))\nonumber
 \end{align}
 which for $a.e. \;t\in(0,T)$ satisfies the identity
\begin{equation}
(e^{bx}\big[u_t-u_{xx}+u_{xxx}+uu_x+u_{xyy}-\partial_x^5 u\big],\phi(x,y))(t)=0,
\label{regularsol}
\end{equation}
where $\phi(x,y)$ is an arbitrary function from $L^2(\mathcal{S}).$
\end{teo}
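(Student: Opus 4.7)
The plan is to realize the desired regular solution as a limit of the Faedo--Galerkin approximations $u^N$ from \eqref{UN}. The coefficient functions $g_j(x,t)$ solve a decoupled-in-the-linear-part system of one-dimensional Kawahara--Burgers type equations with transport coefficient $\lambda_j$; for initial data in $H^s(\mathbb{R})\cap L^2_b(\mathbb{R})$ with $s\geq 5$ the local Cauchy theory cited in \cite{bona1,linponce,kato,ponce3} yields unique smooth solutions. The $N$-independent bounds of Estimates I--IX, together with the restriction \eqref{bcond} which keeps the combination $2+6b-40b^3$ nonnegative so that the dispersive terms stay dissipative after weighting, rule out blow-up and extend each $u^N$ to all of $[0,T]$ for any prescribed $T$.

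Assembling \eqref{E1}, \eqref{E3}--\eqref{E6}, \eqref{E7}, \eqref{E81}, \eqref{E82}, \eqref{E101} and \eqref{E102} produces, uniformly in $N$, precisely the regularity list in the theorem statement: $e^{bx}u^N$ and $e^{bx}u^N_x$ bounded in $L^\infty(0,T;H^2(\mathcal{S}))\cap L^2(0,T;H^3(\mathcal{S}))$, $e^{bx}u^N_t$ bounded in $L^\infty(0,T;L^2(\mathcal{S}))\cap L^2(0,T;H^1(\mathcal{S}))$, $e^{bx}u^N_{xxt}$ bounded in $L^2(0,T;L^2(\mathcal{S}))$ and $e^{bx}\partial_x^5 u^N$ bounded in $L^2(0,T;H^1(\mathcal{S}))$. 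Banach--Alaoglu then supplies a subsequence (not relabeled) converging weakly-$*$ and weakly in each space to a limit $u$ with the stated regularity. The homogeneous Dirichlet conditions in $y$ transfer automatically because every $w_j$ satisfies \eqref{2.5}, while the initial condition \eqref{2.3} is inherited through the embedding of the class $\{v:\ v\in L^\infty(0,T;L^2),\ v_t\in L^2(0,T;L^2)\}$ into $C([0,T];L^2_{\text{weak}})$.

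The only delicate point is identifying the weak limit of the nonlinearity $e^{bx}u^Nu^N_x$ tested against arbitrary $\phi\in L^2(\mathcal{S})$. I would apply the Aubin--Lions--Simon lemma on truncated slabs $\mathcal{S}_r=\{|x|<r,\ 0<y<B\}$: the uniform $L^\infty(0,T;H^2(\mathcal{S}))$-bound on $e^{bx}u^N$ combined with the uniform $L^2(0,T;L^2(\mathcal{S}))$-bound on $e^{bx}u^N_t$ yields precompactness of $\{e^{bx}u^N\}$ in $L^2(0,T;H^{2-\varepsilon}(\mathcal{S}_r))$ and hence in $L^2(0,T;C(\overline{\mathcal{S}_r}))$. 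Splitting $e^{bx}u^Nu^N_x=(e^{bx}u^N)\,u^N_x$ and using Lemma \ref{supr} to control $\sup|e^{bx}u^N_x|$ by the weighted $H^2$-norms allows one to pair strong convergence of $e^{bx}u^N$ with weak convergence of $u^N_x$ and so identify the limit as $e^{bx}uu_x$; the exponential weight pushes the integration from bounded slabs to all of $\mathcal{S}$ since $\phi\in L^2(\mathcal{S})$ and the uniform weighted bounds force decay as $x\to+\infty$. This is the step referenced in the text as the scheme of \cite{kato}, and it is the main technical obstacle of the proof. Once the nonlinear limit is identified, passing to the limit in the linear terms of \eqref{2.6} (after multiplying by $w_j$, summing, and testing against $\phi\eta(t)$ with $\eta\in C_c^\infty(0,T)$) gives \eqref{regularsol} for a.e.\ $t\in(0,T)$; everything else reduces to bookkeeping of the uniform estimates already derived.
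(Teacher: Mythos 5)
Your proposal follows essentially the same route as the paper: Faedo--Galerkin approximations, the uniform Estimates I--IX, weak-$*$ compactness for the linear terms, and local strong compactness (via the weighted bounds on truncated slabs, i.e.\ the scheme of \cite{kato}) to identify the limit of the nonlinearity, followed by a density argument in $L^2(\mathcal{S})$. The paper's own proof is just a terser version of the same argument, so no further comparison is needed.
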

\begin{proof}
Rewrite \eqref{2.6} in the form
\begin{align}
&(e^{bx}\big[u^N_t-u^N_{xx}+u^N
u^N_x+u^N_{xxx}+u^N_{xyy}\nonumber\\&-\partial_x^5 u^N\big],\Phi^N(y)\Psi(x))(t)=0, \label{EqN}
\end{align}
where $\Phi^N(y)$ is an arbitrary function from the set of linear
combinations $\sum_{i=1}^N \alpha_i w_i(y)$ and $\Psi(x)$ is an
arbitrary function from $H^1(\mathbb{R})$. Taking into account
estimates \eqref{E1}, \eqref{EN} and fixing $\Phi^N$, we can easily
pass to the limit as $N\to\infty$ in linear terms of \eqref{EqN}. To
pass to the limit in the nonlinear term, we must use \eqref{Ert} and
repeat arguments of \cite{kato}. Since linear combinations
$[\sum_{i=1}^N \alpha_i w_i(y)]\Psi(x)$ are dense in
$L^2(\mathcal{S}),$ we come to \eqref{regularsol}. This proves the
existence of regular solutions to (2.1)-(2.3).
\end{proof}

\begin{rem} \label{limit estimates}
Estimates \eqref{E1},\eqref{EN} are valid also for  the limit
function $u(x,y,t)$ and \eqref{E1} obtains its sharp form:
\begin{equation}
\|u\|(t)^2+2\int_0^t \|u_x\|^2(s)\,ds=\|u_0\|^2 \quad \forall
t\in(0,T). \label{E1l}
\end{equation}
\end{rem}

{\bf  Uniqueness of a regular solution.}
\begin{teo} \label{uniq} A
regular solution from Theorem \ref{regsol} is uniquely defined.
\end{teo}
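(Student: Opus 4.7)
\textbf{Proof plan for Theorem \ref{uniq}.} Let $u$ and $v$ be two regular solutions of \eqref{2.1}-\eqref{2.3} corresponding to the same initial datum $u_0$, and set $w := u-v$. The identity $uu_x - vv_x = \tfrac{1}{2}\bigl((u+v)w\bigr)_x$ shows that $w$ satisfies
\begin{equation*}
w_t - w_{xx} + \tfrac{1}{2}\bigl((u+v)w\bigr)_x + w_{xxx} + w_{xyy} - \partial_x^5 w = 0,
\end{equation*}
together with $w(x,0,t)=w(x,B,t)=0$ and $w(\cdot,\cdot,0)=0$. The plan is to derive a Gronwall-type weighted $L^2$-energy inequality for $w$ in the same spirit as Estimate II, and to conclude $w\equiv 0$ from the vanishing initial datum.

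Multiplying the difference equation by $2e^{2bx}w$ and integrating over $\mathcal{S}$, the linear manipulations leading to \eqref{2e} go through verbatim, and two integrations by parts in $x$ convert the bilinear term into
\begin{align*}
\tfrac{d}{dt}(e^{2bx},w^2) &+ (2+6b-40b^3)(e^{2bx},w_x^2) + 2b(e^{2bx},w_y^2) + 10b(e^{2bx},w_{xx}^2)\\
&= (4b^2+8b^3-32b^5)(e^{2bx},w^2) + b\,(e^{2bx},(u+v)w^2) - \tfrac{1}{2}(e^{2bx},(u+v)_x w^2).
\end{align*}
Under \eqref{bcond} the three dissipative quadratic terms on the left are nonnegative, so the task reduces to dominating the two bilinear terms on the right by a small multiple of them plus an $L^1(0,T)$-in-$t$ multiple of $(e^{2bx},w^2)$. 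The change of variable $\phi := e^{bx}w$, which vanishes at $y=0,B$ and satisfies $\|\phi\|^2 = (e^{2bx},w^2)$, turns both bilinear terms into $\int F\,\phi^2\,dx\,dy$. Applying Proposition \ref{GN}(i) to $\phi$ (extended by zero outside $\mathcal{S}$) gives $\|\phi\|_{L^4(\mathcal{S})}^2 \leq 2\|\phi\|\|\nabla\phi\|$, while the chain rule yields $\|\nabla\phi\|^2 \leq 2b^2\|\phi\|^2 + 2(e^{2bx},w_x^2) + (e^{2bx},w_y^2)$. Combining with the a priori bounds $\|u+v\|(t)\leq 2\|u_0\|$ from \eqref{E1l} and $\int_0^T\|(u+v)_x\|^2(s)\,ds \leq 2\|u_0\|^2$ from Estimate I, Young's inequality yields
\begin{equation*}
\bigl|b(e^{2bx},(u+v)w^2) - \tfrac{1}{2}(e^{2bx},(u+v)_x w^2)\bigr| \leq \varepsilon\bigl[(e^{2bx},w_x^2)+(e^{2bx},w_y^2)\bigr] + \gamma(t)\|\phi\|^2,
\end{equation*}
with $\gamma\in L^1(0,T)$. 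Choosing $\varepsilon$ small enough that the bracketed terms are absorbed into the dissipative left-hand side reduces the identity to $\tfrac{d}{dt}\|\phi\|^2 \leq \tilde\gamma(t)\|\phi\|^2$ with $\tilde\gamma \in L^1(0,T)$; since $\phi(\cdot,\cdot,0) = 0$, Gronwall's inequality forces $\phi\equiv 0$, hence $u\equiv v$.

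The main technical point will be the treatment of the factor $(u+v)_x$ appearing in the nonlinear remainder: Theorem \ref{regsol} provides only the \emph{weighted} pointwise control $e^{bx}(u+v)_x \in L^\infty(\mathcal{S}_T)$, and no unweighted $L^\infty$ bound is available because $e^{-bx}\to\infty$ as $x\to-\infty$. The substitution $\phi = e^{bx}w$ is precisely the device that transfers the weight onto the difference and lets the standard $L^4$-multiplicative inequality take over; the restriction \eqref{bcond} then becomes essential, since it is what keeps the coefficient $2+6b-40b^3$ bounded below by $2$, ensuring that the gradient contributions produced by the nonlinear estimate can be absorbed on the left.
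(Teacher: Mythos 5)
Your proposal follows essentially the same route as the paper: form the difference $w=u_1-u_2$, take the weighted $L^2$-energy identity with multiplier $2e^{2bx}w$, estimate the two bilinear remainders $b(e^{2bx},(u_1+u_2)w^2)$ and $(e^{2bx},(u_{1}+u_{2})_xw^2)$ via the Ladyzhenskaya inequality \eqref{p1} applied to $e^{bx}w$, absorb the resulting gradient terms into the dissipative left-hand side, and close with Gronwall using $u_i\in L^\infty(0,T;L^2)$, $u_{ix}\in L^2(0,T;L^2)$. The argument is correct and matches the paper's proof of Theorem \ref{uniq} in all essential respects.
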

\begin{proof}
Let $u_1,\,u_2$ be two distinct regular solutions of
\eqref{2.1}-\eqref{2.3}, then $z=u_1-u_2$ satisfies the following
initial-boundary value problem:
\begin{align}
&z_t-z_{xx}+z_{xxx}+z_{xyy}-\partial_x^5 z +\frac{1}{2}(u_1^2-u_2^2)_x=0
\;\mbox{in} \;\mathcal{S}_T,
\label{u1}\\
&z(x,0,t)=z(x,B,t)=0,\quad x\in \mathbb{R},\quad t>0,\label{u2}\\
&z(x,y,0)=0.\quad (x,y)\in \mathcal{S}. \label{u3}
\end{align}
Multiplying \eqref{u1} by $2e^{bx}z$, we get
\begin{align}
&\frac{d}{dt}(\e,z^2)(t)+(2+6b-40b^3)(\e,z_x^2)(t)+10b(\e,z_{xx}^2)(t)\nonumber\\
&-(4b^2+8b^3-32b^5)(\e,z^2)(t)+(\e
[u_{1x}+u_{2x}],z^2)(t)\nonumber\\&+2b(\e, z_y^2)(t)-b(\e(u_1+u_2),z^2)(t)=0.
\label{u4}
\end{align}

We estimate
\begin{align}
&I_1=(\e(u_{1x}+u_{2x}),z^2)(t)\leq
\|u_{1x}+u_{2x}\|(t)\|e^{bx}z\|^2_{L^4(\mathcal{S})}(t)\nonumber\\&\leq
2\|u_{1x}+u_{2x}\|(t)\|e^{bx}z\|(t)\|\nabla(e^{bx}z)\|(t)
\leq
\delta(\e,[2{z_x}^2+{z_y}^2])(t)\nonumber\\&+[2{b^2}\delta+\frac{2}{\delta}(\|u_{1x}\|^2(t)+\|u_{2x}\|^2(t))](\e,z^2)(t),\nonumber\\
&I_2=b(\e(u_{1}+u_{2}),z^2)(t)\leq
b\|u_{1}+u_{2}\|(t)\|e^{bx}z\|^2_{L^4(\mathcal{S})}(t)\nonumber\\&\leq
2b\|u_{1}+u_{2}\|(t)\|e^{bx}z\|(t)\|\nabla(e^{bx}z)\|(t)\nonumber\\
&\leq
\delta(\e,2z_x^2+z_y^2)(t)+[2b^2\delta+\frac{2b^2}{\delta}(\|u_{1}\|^2(t)
+\|u_{2}\|^2(t))](\e,z^2)(t).\nonumber
\end{align}
Substituting $I_1,I_2$ into \eqref{u4} and taking $\delta>0$
sufficiently small, we find

\begin{align}
&\frac{d}{dt}(\e,z^2)(t)\leq
C(b)\big[1+\|u_1\|^2(t)\nonumber\\&+\|u_2\|^2(t)+\|u_{1x}\|^2(t)+\|u_{2x}\|^2(t)\big](\e,z^2)(t).\label{u5}
\end{align}

Since
$$u_i\in L^{\infty}(0,T;L^2(\mathcal{S})),\quad u_{ix}\in
L^2(0,T;L^2(\mathcal{S}))\quad i=1,2,
$$
 then by the Gronwall lemma,
$$(\e, z^2)(t)=0 \quad\forall \:t\in(0,T).$$
Hence, $u_1=u_2 \quad a.e.$ in $\mathcal{S}_T.$
\end{proof}
\begin{rem}
Changing initial condition \eqref{u3} for $z(x,y,0)=z_0(x,y)\ne 0,$
and repeating the proof of Theorem 3.4, we obtain from \eqref{u5}
that
$$(\e,z^2)(t)\leq C(b,T,\|u_0\|)(\e,z^2_0)\quad \forall t\in(0,T).$$
This means continuous dependence of regular solutions on initial
data.
\end{rem}

\section{Decay of regular solutions}\label{regdecay}
 In this section we will prove exponential decay of regular
 solutions in an elevated weighted norm. We start with  Theorem  \ref{decay1} which
 is crucial for the main result.

\begin{teo} \label{tedec1}
Let  $b\in(0,b_0),\;
\|u_0\|\leq \frac{3\pi}{8B}$ and $u(x,y,t)$ be a regular solution of
\eqref{2.1}-\eqref{2.3}. Then for all finite $B>0$ the following
inequalities are true:
\begin{align}
& \|e^{bx}u\|^2(t)\leq e^{-\chi t}\|e^{bx}u_0\|^2(0),\label{dec1}\\
&\int_0^t e^{\chi s}(\e,|\nabla u|^2)(s)\,ds\leq C(b,\|u_0\|)(1+t)(\e,u_0^2), \label{dec2}
\end{align}
 where
 $$\chi=\frac{b_0\pi^2}{4B^2},\quad b_0=\min\big(\frac{\sqrt{0,6}}{2},\quad\frac{1}{5}[-1+\sqrt{1+\frac{5\pi^2}{4B^2}}] \big).$$
\end{teo}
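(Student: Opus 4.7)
The plan is to start from the weighted $L^2$ identity \eqref{2e} of Estimate II --- valid for the regular solution $u$ guaranteed by Theorem \ref{regsol} and Remark \ref{limit estimates} --- then absorb the cubic term $\tfrac{4b}{3}(\e,u^3)$ into the dissipative terms on the left, and finally apply a Poincar\'e inequality in the $y$-variable to extract pointwise-in-time exponential decay of $(\e,u^2)$.

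For the absorption step I would set $v=e^{bx}u$, which vanishes at $y=0,B$. Extending $v$ by zero to $\mathbb{R}^2$, H\"older and the two-dimensional Ladyzhenskaya inequality \eqref{p1} give
\[
(\e,u^3)=(u,v^2)\le\|u\|\,\|v\|_{L^4}^2\le 2\|u\|\,\|v\|\,\|\nabla v\|.
\]
A direct computation yields $\|\nabla v\|^2=(\e,u_x^2+u_y^2)-b^2(\e,u^2)$, while the Poincar\'e inequality $\|v\|\le\tfrac{B}{\pi}\|v_y\|=\tfrac{B}{\pi}(\e,u_y^2)^{1/2}$ is available because $v$ is Dirichlet in $y$. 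Combined with $\|u(t)\|\le\|u_0\|$ from \eqref{E1l} and the hypothesis $\|u_0\|\le\tfrac{3\pi}{8B}$, the geometric factor $\tfrac{2B}{\pi}$ produced by the Ladyzhenskaya/Poincar\'e chain collapses exactly to an absorbable bound of the form
\[
\tfrac{4b}{3}(\e,u^3)\le b(\e,u_x^2)+b(\e,u_y^2)-b^3(\e,u^2).
\]

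Substituting this into \eqref{2e} and invoking Proposition \ref{bcond} to preserve positivity of the $u_x^2$ and $u_{xx}^2$ coefficients yields a differential inequality of the form
\[
\tfrac{d}{dt}(\e,u^2)+(2+5b-40b^3)(\e,u_x^2)+b(\e,u_y^2)+10b(\e,u_{xx}^2)\le(4b^2+7b^3-32b^5)(\e,u^2).
\]
The Poincar\'e inequality in $y$, $(\e,u^2)\le\tfrac{B^2}{\pi^2}(\e,u_y^2)$ (available because $u$ vanishes at $y=0,B$), is then used to convert part of $b(\e,u_y^2)$ into a multiple of $(\e,u^2)$: the algebraic definition of $b_0$ is tuned precisely so that the Poincar\'e gain $\tfrac{b\pi^2}{B^2}$ exceeds the destabilizing $O(b^2)$ terms by at least $\chi=\tfrac{b_0\pi^2}{4B^2}$. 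Gronwall's lemma then gives \eqref{dec1}, and multiplying the full differential inequality by $e^{\chi s}$ and integrating on $(0,t)$, together with positivity of the remaining dissipative coefficients on $(\e,u_x^2),(\e,u_y^2),(\e,u_{xx}^2)$, produces \eqref{dec2}.

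The main obstacle is the bookkeeping in the absorption step: the cubic must be bounded tightly enough that a full $b$ of the $(\e,u_y^2)$ dissipation survives, because otherwise the Poincar\'e gain cannot dominate the destabilizing coefficient $4b^2+8b^3-32b^5$ generated when $\e$ is tested against the lower-order terms of $Lu$. This double requirement is exactly what forces the smallness hypothesis $\|u_0\|\le\tfrac{3\pi}{8B}$ together with the quadratic restriction on $b$ built into $b_0$, and it is the reason the argument survives for strips of arbitrary width $B$ (at the price of $b_0\to 0$ as $B\to\infty$).
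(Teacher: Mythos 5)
Your proposal is correct and follows essentially the same route as the paper: the same weighted identity \eqref{2e}, the Ladyzhenskaya bound \eqref{p1} applied to the cubic term, the Steklov--Poincar\'e inequality in $y$ (Proposition \ref{propcr}), and Gronwall, with \eqref{dec2} obtained by multiplying by $e^{\chi t}$, integrating, and invoking \eqref{dec1}. The only (immaterial) bookkeeping difference is that you use the Poincar\'e inequality and the smallness hypothesis $\|u_0\|\le \frac{3\pi}{8B}$ already inside the estimate of $(\e,u^3)$, whereas the paper retains a term $\frac{16b}{9}\|u_0\|^2(\e,u^2)$ and balances it against the Poincar\'e gain only at the end through the choice $\gamma=\frac{1}{2}$ in \eqref{algebra}.
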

\begin{proof}
Multiplying \eqref{2.1} by $2\e u$, we get the equality
\begin{align}
&\frac{d}{dt}(e^{2bx},u^2)(t)+(2+6b-40b^3)(e^{2bx},u^2_x)(t)+10b(\e,u_{xx}^2)(t)\nonumber\\&+2b(e^{2bx},u^2_y)(t)-\frac{4b}{3}(e^{2bx},u^3)(t)\nonumber\\& -(4b^2+8b^3-32b^5)(e^{2bx},u^2)(t)=0.
\label{ed1}
\end{align}
Taking into account \eqref{p1}, we estimate
\begin{align}
&I=\frac{4b}{3}(e^{2bx},u^3)(t)\leq b(e^{2bx},u_y^2+2u_x^2+2b^2
u^2)(t)\nonumber\\
&+\frac{16b}{9}\|u_0\|^2(e^{2bx},u^2)(t).\nonumber
\end{align}

The following proposition is principal for our proof.
\begin{prop}\label{propcr}
\begin{equation}
\int_{\mathbb{R}}\int_0^B e^{2bx}u^2(x,y,t)\,dy\,dx\leq
\frac{B^2}{\pi^2}\int_{\mathbb{R}}\int_0^B
e^{2bx}u^2_y(x,y,t)\,dy\,dx. \label{mineq}
\end{equation}

\end{prop}
\begin{proof}
Since $u(x,0,t)=u(x,B,t)=0,$ fixing $(x,t)$, we can use with respect
to $y$ the following Steklov inequality: if $f(y)\in H^1_0(0,\pi)$
then
$$\int_0^{\pi}f^2(y)\,dy\leq \int_0^{\pi} |f_y(y)|^2\,dy.$$
After a corresponding process of scaling we prove  Proposition
\ref{propcr}.
\end{proof}

 Making use of \eqref{mineq} and substituting $I$ into \eqref{ed1},
 we come to the following inequality:
\begin{align}
&\frac{d}{dt}(e^{2bx},u^2)(t)+(e^{2bx},u^2_x)(t)\nonumber\\
&+\big[\frac{b\pi^2}{B^2}-
4b^2-10b^3-\frac{16b}{9}\|u_0\|^2\big](\e,u^2)(t)\leq 0 \nonumber
\end{align}
which can be rewritten as

\begin{equation} \frac{d}{dt}(e^{2bx},u^2)(t) +\chi
(e^{2bx},u^2)(t)\leq 0, \label{d2}
\end{equation}
where
$$\chi=b\big[\frac{\pi^2}{B^2}-4b-10b^2-\frac{16\|u_0\|^2}{9}\big].$$
Since we need $\chi>0,$ define
\begin{equation}
4b+10b^2=\gamma\frac{\pi^2}{B^2},\quad
\frac{16\|u_0\|^2}{9}=(1-\gamma)^2\frac{\pi^2}{B^2}, \label{algebra}
\end{equation}
where $\gamma\in(0,1).$ It implies
$\chi=bA(\gamma)\frac{\pi^2}{B^2}$ with
$A(\gamma)=\gamma(1-\gamma).$\\
It is easy to see that
$$ \sup_{\gamma\in(0,1)}A(\gamma)=A(\frac{1}{2})=\frac{1}{4}.$$
Solving \eqref{algebra}, we find
$$b=\frac{1}{5}[-1+\sqrt{1+\frac{5\pi^2}{4B^2}}],\quad \|u_0\|\leq
\frac{3\pi}{8B},\quad \chi=b\frac{\pi^2}{4B^2},$$ and from
\eqref{d2} we get
$$ (e^{2bx},u^2)(t)\leq e^{-\chi
t}(e^{2bx},|u_0|^2).$$ The last inequality
 implies \eqref{dec1}.\par
 \end{proof} To prove \eqref{dec2}, we return to \eqref{ed1} and multiply it by $e^{\chi t}$ to obtain
  \begin{align}
  &\frac{d}{dt}[e^{\chi t}(\e,u^2)(t)]+e^{\chi t}[(2+6b^2-40b^3)(\e,u_x^2)(t)+2b(\e,u_y^2)(t)\nonumber\\&
  +10b(\e,u_{xx}^2)(t)]=\frac{4b e^{\chi t}}{3}(\e,u^3)(t)+\nonumber\\
  &e^{\chi t}(\chi+4b^2+8b^3-32b^5)(\e,u^2)(t). \label{ed2}
 \end{align}
  Acting as above, we find
  \begin{align}
  &I=\frac{4b}{3}(\e,u^3)(t)\leq b(\e,2u_x^2+u_y^2)(t)+10b(\e,u_{xx}^2)(t)\nonumber\\
  &+(2b^2+\frac{16b\|u_0\|^2}{9})(\e,u^2)(t).
  \end{align}
 Substituting this into \eqref{ed2}, we get
 \begin{align}
 &\frac{d}{dt}[e^{\chi t}(\e,u^2)(t)]+e^{\chi t}(\e,|\nabla u|^2)(t)+10be^{\chi t}(\e,u_{xx}^2)(t)\nonumber\\
 &\leq C(b,\|u_0\|)e^{\chi t}(\e,u^2)(t).
 \end{align}
 
 Integrating and  \eqref{dec1} imply
 \begin{align}
 &e^{\chi t}(\e,u^2)(t)+\int_0^t e^{\chi s}(\e,|\nabla u|^2+u_{xx}^2)(s)\,ds\nonumber\\
 &\leq C(b,\|u_0\|)(1+t)(\e,u_0^2). \label{ed3}
 \end{align}
 The proof of Theorem \ref{tedec1} is complete.
 
 Observe that differently from \cite{larkin1,larkinH1,h-strip}, we do not have
 any restrictions on the width of a strip $B$.

The main result of this section is the following assertion.
\begin{teo} \label{tedec2}
Let all the conditions of Theorem \ref{tedec1} be fulfilled. Then
regular solutions of \eqref{2.1}-\eqref{2.3} satisfy the following
inequality:
\begin{align}
&(\e,u^2+|\nabla u|^2+u_{xx}^2)(t)\nonumber\\&\leq C(b,\|u_0\|)(1+t)e^{-\chi t}(\e,
\big[u_0^2+|\nabla u_0|^2+u_{0xx}^2\big]) \label{decaymain}
\end{align}
or
$$\|e^{bx}u\|^2_{H^1(\mathcal{S})}(t)+\|u_{xx}\|^2(t)\leq C(b,\|u_0\|)(1+t)e^{-\chi
t}(\e,
u_0^2 +|\nabla u_0|^2+u_{0xx}^2).$$
\end{teo}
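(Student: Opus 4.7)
My plan is to upgrade Theorem \ref{tedec1}'s pointwise decay of $(\e, u^2)$ and time-integrated decay of $(\e, |\nabla u|^2)$ to pointwise decay of the higher-order weighted energy $(\e, u_x^2 + u_y^2 + u_{xx}^2)(t)$ by revisiting Estimates III, IV, V (identities \eqref{e3}, \eqref{e4}, \eqref{e5}), this time multiplied by $e^{\chi t}$, and by invoking appropriate Poincar\'e/Steklov-type inequalities in the bounded $y$-direction to manufacture the $\chi$-gap on the dissipation side, exactly as Proposition \ref{propcr} was used to produce the decay of $(\e, u^2)$.

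For $(\e, u_x^2)$ I would multiply \eqref{e3} by $e^{\chi t}$. Since $u|_{y=0,B}=0$ implies $u_x|_{y=0,B}=0$, Proposition \ref{propcr} applies verbatim to $u_x$ and gives $(\e, u_x^2) \leq (B^2/\pi^2)(\e, u_{xy}^2)$; a fraction of the dissipative term $2b(\e, u_{xy}^2)$ then dominates the $\chi(\e, u_x^2)$ coming from the time-derivative of $e^{\chi t}(\e, u_x^2)$ together with the lower-order $(4b^2+8b^3-32b^5)(\e, u_x^2)$ term already present in \eqref{e3}, provided $b_0$ is chosen so that $2b\pi^2/B^2$ is comfortably larger than $\chi$ plus the absorbed constants. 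The nonlinear pieces $I_1, I_2$ from the paper's Estimate III contribute at most $C(b)(1+\|u_x\|^2(t)) e^{\chi t}(\e, u_x^2)(t)$ after their derivative parts are absorbed, and since $\int_0^{\infty}\|u_x\|^2(s)\,ds \leq \|u_0\|^2/2$ by \eqref{E1l}, the Gronwall lemma yields $e^{\chi t}(\e, u_x^2)(t) \leq C(b,\|u_0\|)(\e, u_{0x}^2)$.

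For $(\e, u_y^2)$ I would argue in the same way from \eqref{e4}. The function $u_y$ does not vanish on $\partial\mathcal{S}$, but $\int_0^B u_y(x,y,t)\,dy = u(x,B,t)-u(x,0,t)=0$, so $u_y$ has mean zero in $y$; the Poincar\'e--Wirtinger inequality then yields $(\e, u_y^2)(t) \leq (B^2/\pi^2)(\e, u_{yy}^2)(t)$, and the dissipative term $2b(\e, u_{yy}^2)$ supplies the needed gap. The nonlinearity $2(1-b)(\e, u_x u_y^2)$ is handled exactly as in the paper's Estimate IV, and the Gronwall coefficient is again integrable. For $(\e, u_{xx}^2)$ I would run the same template on \eqref{e5}: $u|_{y=0,B}=0$ gives $u_{xx}|_{y=0,B}=0$, so Steklov furnishes $(\e, u_{xx}^2) \leq (B^2/\pi^2)(\e, u_{xxy}^2)$, and the nonlinear terms $-2b(\e, u u_{xx}^2)+5(\e u_x, u_{xx}^2)$ are absorbed as in Estimate V via $\|u_0\|$-smallness and $\|u_x\|^2\in L^1$.

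Summing the three Gronwall bounds together with the estimate from Theorem \ref{tedec1} gives \eqref{decaymain}; the factor $(1+t)$ is inherited from \eqref{ed3} once the time integrations involving $e^{\chi s}(\e, u^2)(s)$ are carried out in the nonlinear bookkeeping. The main obstacle I anticipate is the simultaneous choice of $b_0$ making each dissipative coefficient produced by Poincar\'e/Steklov strictly exceed $\chi$ plus every constant that has been absorbed from the lower-order and nonlinear terms in all three identities at once; should the term-by-term estimates prove too rigid, a cleaner alternative is to sum \eqref{e3}, \eqref{e4}, \eqref{e5} first, multiply by $e^{\chi t}$ and perform a single global absorption, which relaxes the balancing requirements and still produces the claimed $(1+t)e^{-\chi t}$ rate.
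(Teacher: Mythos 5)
Your route is genuinely different from the paper's, and as written its decisive step would fail. You propose to manufacture the $\chi$-gap separately in \eqref{e3}, \eqref{e4}, \eqref{e5} via Steklov/Poincar\'e--Wirtinger inequalities on $u_x$, $u_y$, $u_{xx}$ and then close with Gronwall. For Gronwall to preserve the decay, \emph{every} time-independent coefficient of $(\e,u_x^2)$ (resp.\ of $u_y^2$, $u_{xx}^2$) left over after absorption must be strictly dominated by the reserved fraction of the dissipation $2b\frac{\pi^2}{B^2}$; only the time-integrable pieces (those carrying $\|u_x\|^2(t)$) may be left to Gronwall. But the nonlinear terms leave behind genuine constants: for instance $2b(\e,uu_x^2)\le\delta(\e,2u_{xx}^2+u_{xy}^2)+\big[2b^2\delta+\frac{4b^2}{\delta}\|u_0\|^2\big](\e,u_x^2)$ produces the constant $\frac{4b^2\|u_0\|^2}{\delta}$, and $\delta$ is capped by the amount of $(\e,u_{xy}^2)$-dissipation not already spent covering $\chi+4b^2+8b^3-32b^5$. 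Optimizing over $\delta$, the balance closes only for $\|u_0\|$ roughly below $\frac{5\pi}{16\sqrt{2}\,B}$, which is \emph{not} implied by the hypothesis $\|u_0\|\le\frac{3\pi}{8B}$ of Theorem \ref{tedec1}; if the constant part of the Gronwall coefficient is not fully absorbed, Gronwall returns a factor $e^{Ct}$ and the decay is lost. The same difficulty recurs in \eqref{e4} (the $\delta(1+2b^2)(\e,u_y^2)$ remainder) and in \eqref{e5}, and your fallback of summing the three identities before absorbing does not change this arithmetic.

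The paper sidesteps the absorption entirely. Lemma \ref{lem1} uses the single multiplier $-2\e\big(u_{yy}+u_{xx}-\partial_x^4u\big)$, multiplies the resulting identity by $e^{\chi t}$ and integrates in time, so the problematic quantity appears as $\int_0^te^{\chi s}(\chi+4b^2+8b^3-32b^5)(\e,|\nabla u|^2+u_{xx}^2)(s)\,ds$; this, together with the remainders of the nonlinear estimates $I_1$--$I_3$, is bounded directly by the time-integrated decay estimate \eqref{dec2}/\eqref{ed3} already established in Theorem \ref{tedec1}. That is precisely where the factor $(1+t)$ in \eqref{decaymain} comes from, and the argument needs no Steklov inequality on derivatives and no smallness beyond what Theorem \ref{tedec1} already assumes. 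Your closing remark that $(1+t)$ is ``inherited from \eqref{ed3}'' points at the correct mechanism, but to make it carry the proof you must abandon the pointwise Gronwall scheme and work with the time-integrated combined identity, as the paper does.
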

\begin{proof} We start with the following lemma.
\begin{lem} \label{lem1}
Regular solutions of (2.1)-(2.3) satisfy the following equality:
\begin{align}
 &e^{\chi t}(\e, |\nabla u|^2+u_{xx}^2)(t)+2\int_0^t
e^{\chi
s}\{(1+3b-20b^3)(\e,|\nabla u_{x}|^2+u_{xxx}^2)(s)\nonumber\\
&+b(\e,|\nabla u_{y}|^2+u_{xxy}^2)(s)+5b(\e,|\nabla u_{xx}|^2+|\partial_x^4 u|^2)(s)\}\,ds=\nonumber\\
&\int_0^t e^{\chi s}(\chi+4b^2+8b^3-32b^5)(\e,|\nabla
u|^2+u_{xx}^2)(s)\,ds\nonumber\\&+\int_0^t e^{\chi s}\{4b(\e u,u_x^2)(s)
+2(\e u u_x, u_{yy}+4b^2 u_{xx}+4bu_{xxx}+\partial_x^4 u)(s)\nonumber\\
&-(\e u^2,u_{xxx}+2bu_{xx})(s)\}\,ds+(\e,|\nabla u_0|^2+u_{0xx}^2).\label{e412}
\end{align}
\end{lem}
\begin{proof}
First we transform the scalar product
\begin{align}
&-2(e^{2bx}\big[u_t-u_{xx}+u_{xxx}+u_{xyy}+uu_x-\partial_x^5 u\big],\nonumber\\
&\big[u_{yy}+u_{xx}-\partial_x^4 u\big])(t)=0 \label{e2l1}
\end{align}
into the following equality:
\begin{align}
& \frac{d}{dt}(\e, |\nabla u|^2+u_{xx}^2)(t) +2(1+3b-20b^3)(\e,|\nabla u_{x}|^2+u_{xxx}^2)(t)\nonumber\\
& +2b(\e,|\nabla u_{y}|^2+u_{xxy}^2)(t)
+10b(\e,|\nabla u_{xx}|^2+|\partial_x^4 u|^2)(t)\nonumber\\
& =4b^{2}(1+2b-8b^3)(\e,|\nabla u|^2+u_{xx}^2)(t)+2(\e uu_x, u_{yy}\nonumber\\
&+4b^2 u_{xx}+4bu_{xxx}+\partial_x^4 u)(t)-(\e u^2, u_{xxx}+2b u_{xx})(t)\nonumber\\& +4b(\e,uu_x^2)(t). 
\end{align}
Multiplying this by $e^{\chi t}$ and integrating, we prove \eqref{e412}.
\end{proof}

Making use of Lemma \ref{supr}, estimate separate terms in \eqref{e412} as follows:

\begin{align}
& I_1=2(\e uu_x, u_{yy}+4b^2 u_{xx}+4bu_{xxx}+\partial_x^4 u)(t)\nonumber\\
&\leq 2\|u\|(t)\sup_{\mathcal{S}}|e^{bx}u_x|(t)\|e^{bx}[u_{yy}+4b^2 u_{xx}+4bu_{xxx}+\partial_x^4 u]\|(t)\nonumber\\
& \leq \epsilon(1+\|u_0\|^2)(\e,u_{yy}^2+u_{xx}^2+u^2_{xxx}+|\partial_x^4 u|^2)(t)\nonumber\\&+\frac{C(b)}{\epsilon}\{2\delta(\e,u_{xy}^2+u_{xxy}^2)(t) +\frac{3}{\delta}(\e,u_{xx}^2+u_{x}^2)(t)\}.\nonumber\\
& I_2=4b(\e,uu_x^2)(s)\leq \delta(\e,2u_{xx}^2+u_{xy}^2)(s)\nonumber\\
&+[2b^2\delta+\frac{16(1+4b)^2\|u_0\|^2}{\delta}](\e,u_x^2)(s);\nonumber\\
& I_3=(\e u^2,u_{xxx}+2bu_{xx})(t)\leq\|u\|(t)\sup_{\mathcal{S}}|e^{bx}u|(t)\|e^{bx}[u_{xxx}+2bu_{xx}]\|(t)\nonumber\\
&\leq \frac{\epsilon}{2}(\e,u_{xxx}^2+u_{xx}^2)(t)+C(b)\|u_0\|^2[\delta(\e,u_{xy}^2)(t)\nonumber\\&+\frac{1}{\delta}(2\e, |\nabla u|^2+u^2)(t)].\nonumber
\end{align}

Choosing $\epsilon, \delta $ sufficiently small,  substituting $I_1-I_3 $ into  \eqref{e412} and taking into account \eqref{dec1}, we prove that
\begin{align}
& e^{\chi t}(\e,|\nabla u|^2+u_{xx}^2)(t)\leq C(b,\|u_0\|)(1+t)(\e,
u_0^2+|\nabla u_0|^2+u_{0xx}^2).\nonumber
\end{align}
 Adding \eqref{dec1}, we complete
the proof of Theorem \ref{tedec2}.
\end{proof}

\section{Weak solutions}\label{weak}

Here we will prove the existence, uniqueness and continuous
dependence on initial data  as well as exponential decay
 results for weak solutions of \eqref{2.1}-\eqref{2.3} when the initial function $u_0\in L^2(\mathcal{S}).$

\begin{teo}\label{weakexist}
Let $u_0 \in L^2 (\mathcal{S})\cap L^2_b(\mathcal{S}).$ Then for all
finite positive $T$ and $B$ there exists at least one function $u(x,y,t)$:
$$u \in L^{\infty}(0,T;L^2(\mathcal{S})),\;u_x\in
          L^2(0,T;L^2(\mathcal{S}))$$ such that $$e^{bx}u \in
L^{\infty}(0,T;L^2(\mathcal{S}))\cap L^2(0,T;H^1(\mathcal{S})),$$
$$e^{bx}u_{xx}\in L^2(0,T;L^2(\mathcal{S}))$$
 and
the following integral identity takes a place:
\begin{align}\
&(e^{bx} u,v)(T)+\int_0^T\{-(e^{bx} u,v_t)(t)-\frac{1}{2}(\e u^2,bv+v_x)(t)\nonumber\\&+ (e^{bx} u_{xx},[v_{xxx}+3bv_{xx}+(3b^2-1) v_x+(b^3-b-1) v])(t)\nonumber\\
&+(e^{bx} u_y,bv_y+v_{xy})(t)\}\,dt=(e^{bx}
u_0,v(x,y,0)),\label{weakdef}
\end{align}
where $v \in C^{\infty}(\mathcal{S}_T)$ is an arbitrary function.
\end{teo}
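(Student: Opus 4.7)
My plan is to stay within the Faedo-Galerkin framework of Section \ref{regexist}. First I would approximate $u_0\in L^2(\mathcal{S})\cap L^2_b(\mathcal{S})$ by smooth data $u_0^N$ obtained by projecting onto the first $N$ eigenfunctions $\{w_j(y)\}$ and then mollifying the $x$-coefficients and multiplying by a smooth $x$-cutoff, so that $u_0^N\to u_0$ in $L^2(\mathcal{S})\cap L^2_b(\mathcal{S})$ and the quantity $J_0^N$ of Theorem \ref{regsol} is finite for every $N$. Theorem \ref{regsol} then supplies regular solutions $u^N$ of the Galerkin system \eqref{2.6}--\eqref{2.7}. The key observation is that the only estimates I actually need, namely Estimate I \eqref{E1} and Estimate II \eqref{E2}--\eqref{H1}, depend on the data solely through $\|u_0^N\|$ and $(e^{2bx},(u_0^N)^2)$; in particular, condition \eqref{bcond} on $b$ is exactly what is required for the absorption \eqref{nl} of the cubic term. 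Consequently these estimates produce uniform-in-$N$ bounds on $u^N$ in $L^\infty(0,T;L^2)$, on $u^N_x$ in $L^2(0,T;L^2)$, on $e^{bx}u^N$ in $L^\infty(0,T;L^2)\cap L^2(0,T;H^1)$, and on $e^{bx}u^N_{xx}$ in $L^2(0,T;L^2)$, which is precisely the regularity claimed in the statement.

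By Banach-Alaoglu I would then extract a subsequence, not relabeled, converging in each of the corresponding weak or weak-$\ast$ topologies to a limit $u$. All linear terms in \eqref{weakdef} pass to the limit immediately, being pairings of $u^N$, $u^N_x$, $e^{bx}u^N_y$, or $e^{bx}u^N_{xx}$ against fixed smooth quantities built from the test function $v$.

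The main obstacle is the quadratic term $\tfrac12(e^{2bx}(u^N)^2,bv+v_x)$, which demands strong convergence. To produce it I would apply the Aubin-Lions-Simon compactness lemma on any bounded rectangle $\Omega_R=\{(x,y):|x|<R,\;0<y<B\}$. On such a set, the weighted bound on $e^{bx}u^N$ in $L^2(0,T;H^1)$ together with the unweighted bound on $u^N_x$ in $L^2(0,T;L^2)$ yield a uniform bound on $\{u^N\}$ in $L^2(0,T;H^1(\Omega_R))$, while a uniform bound on $u^N_t$ in $L^\infty(0,T;H^{-s}(\Omega_R))$ for some $s\ge 5$ follows from the equation \eqref{2.6} by integrating by parts against smooth test functions: each linear term moves at most five derivatives onto the test function, and the nonlinearity $\tfrac12\partial_x((u^N)^2)$ is controlled in $L^\infty(0,T;H^{-2}_{\mathrm{loc}})$ via the embedding $L^1_{\mathrm{loc}}\hookrightarrow H^{-2}_{\mathrm{loc}}$ applied to $(u^N)^2$. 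Aubin-Lions then delivers strong convergence in $L^2(0,T;L^2(\Omega_R))$, which is enough to pass to the limit in the nonlinear term against test functions of compact support; the general case of $v\in C^\infty(\mathcal{S}_T)$ with reasonable decay at infinity follows by density, exploiting the decay built into $e^{bx}$. The initial condition encoded in \eqref{weakdef} is recovered from the weak-in-time continuity of $u$ (a consequence of the time-derivative bound) combined with $u_0^N\to u_0$ in $L^2_b$.
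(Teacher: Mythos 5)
Your proposal is correct and follows essentially the same route as the paper: approximate the data by smooth functions, invoke the regular/Galerkin solutions, observe that Estimates I and II depend only on $\|u_0\|$ and $(e^{2bx},u_0^2)$, and pass to the limit in the weak formulation. The only difference is one of detail: the paper simply cites Kato for the passage to the limit in the nonlinear term, whereas you spell out the Aubin--Lions compactness argument on bounded rectangles, which is exactly the standard way to justify that step.
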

\begin{proof}
In order to justify our calculations, we must operate with
sufficiently smooth solutions $u^m(x,y,t)$. With this purpose, we
consider first initial functions $u_{0m}(x,y)$,  which satisfy
conditions of Theorem \ref{regsol}, and obtain estimates \eqref{E1},
\eqref{Ert} for functions $u^m(x,y,t)$. This allows us to pass to
the limit as $m\to \infty$ in the following identity:
\begin{align}
&(e^{bx} u^m,v)(T)+\int_0^T\{-(e^{bx} u^m,v_t)(t)-\frac{1}{2}(\e |u^m|^2,bv+v_x)(t)\nonumber\\&+ (e^{bx} u^m_{xx},[v_{xxx}+3bv_{xx}+(3b^2-1) v_x+(b^3-b-1) v])(t)\nonumber\\
&+(e^{bx} u^m_y,bv_y+v_{xy})(t)\}\,dt=(e^{bx}
u^m_0,v(x,y,0)),
\end{align}
and come to \eqref{weakdef}.
\end{proof}

{\bf  Uniqueness of a weak solution.}

\begin{teo} \label{weakuniq}
A weak solution of Theorem \ref{weakexist} is uniquely defined.
\end{teo}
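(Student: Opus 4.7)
The natural approach is to mimic the uniqueness argument for regular solutions (Theorem \ref{uniq}), since the required regularity of the difference $z=u_1-u_2$ to make the key $L^2$--$H^1$ energy estimate rigorous is exactly what the weak formulation \eqref{weakdef} already provides. First I would subtract the integral identities satisfied by $u_1$ and $u_2$ against the same test function $v$, using the algebraic identity $\tfrac{1}{2}(u_1^2-u_2^2) = \tfrac{1}{2}(u_1+u_2)z$, obtaining a linear integral identity for $z$ with a coefficient in $L^\infty(0,T;L^2)$ given by $u_1+u_2$ and one in $L^2(0,T;L^2)$ given by $(u_1+u_2)_x$.

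Next I would produce an energy identity for $(\e,z^2)(t_0)$ by testing this linear identity against a smooth approximation of $\chi_{[0,t_0]}(t)\,e^{bx}z$. The available regularity of $z$ — namely $e^{bx}z\in L^\infty(0,T;L^2)\cap L^2(0,T;H^1)$, $e^{bx}z_{xx}\in L^2(0,T;L^2)$ and $z_x\in L^2(0,T;L^2)$ — is enough to justify, via a standard space--time mollification (Friedrichs' lemma), the integration by parts that reproduces the left-hand side of \eqref{u4}. The nonlinear contribution reduces to
\[
\int_0^{t_0}\bigl[(\e(u_{1x}+u_{2x}),z^2)(t)-b(\e(u_1+u_2),z^2)(t)\bigr]dt,
\]
which is estimated verbatim by the bounds $I_1,I_2$ in the proof of Theorem \ref{uniq}, using Proposition \ref{GN}(i) applied to $e^{bx}z$ after the usual extension of $z$ by zero across $y=0,B$. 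Choosing $\delta$ small and invoking $b\leq b_0$ to keep $2+6b-40b^3>0$, I absorb the $(\e,z_x^2)$ and $(\e,z_y^2)$ terms into the left-hand side and arrive at the integral form of \eqref{u5}, namely
\[
(\e,z^2)(t_0)\leq C(b)\int_0^{t_0}\!\bigl[1+\|u_1\|^2+\|u_2\|^2+\|u_{1x}\|^2+\|u_{2x}\|^2\bigr](s)\,(\e,z^2)(s)\,ds.
\]

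Since the factor inside the integral is integrable on $(0,T)$ by the inclusions $u_i\in L^\infty(0,T;L^2(\mathcal{S}))$ and $u_{ix}\in L^2(0,T;L^2(\mathcal{S}))$ that are part of the definition of weak solution, Gronwall's inequality yields $(\e,z^2)(t_0)=0$ for every $t_0\in(0,T]$, hence $u_1=u_2$ almost everywhere in $\mathcal{S}_T$. The one step requiring care — and the place where the argument differs essentially from the proof of Theorem \ref{uniq} — is the justification of taking $e^{bx}z$ as test function in the weak formulation; all other manipulations are algebraic. Modifying the initial data to $z_0\neq 0$ in the same estimate gives continuous dependence $(\e,z^2)(t)\leq C(b,T,\|u_0\|)(\e,z_0^2)$ at no extra cost, exactly as in Remark after Theorem \ref{uniq}.
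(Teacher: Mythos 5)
Your route is genuinely different from the paper's, and it contains a gap at exactly the step you flag as ``requiring care''. The paper does not test the weak formulation with $e^{bx}z$ at all: it approximates $u_0\in L^2$ by smooth data $u_{0m}$ satisfying the hypotheses of Theorem \ref{regsol}, applies the uniqueness/energy estimate of Theorem \ref{uniq} to the resulting \emph{regular} solutions (where every integration by parts is classical), and then identifies the weak solution as the weak limit of the $u_m$, concluding $(\e,z^2)(t)\leq\liminf_m(\e,z_m^2)(t)=0$ by lower semicontinuity of the norm. All the delicate analysis is thus done at the regular level and only the final inequality is passed to the limit.

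The problem with your direct argument is that $e^{bx}z$ is not an admissible test function for \eqref{weakdef}, and no ``standard space--time mollification'' repairs this. The weak formulation pairs $e^{bx}u_{xx}\in L^2(0,T;L^2)$ against $v_{xxx}$; with $v\sim e^{bx}z$ this produces $(e^{bx}z_{xx},e^{bx}z_{xxx})$ and similar terms, and $z_{xxx}$ is one full derivative beyond what the solution class controls. Likewise the time term becomes $\langle z_t,e^{2bx}z\rangle$, where $z_t$, read off from the equation, lies only in $L^2(0,T;H^{-3})$ while $e^{bx}z$ lies in $L^2(0,T;H^1)$ (with $e^{bx}z_{xx}\in L^2(L^2)$), so the Lions--Magenes duality lemma that normally legitimizes $\frac{d}{dt}\|z\|^2=2\langle z_t,z\rangle$ does not apply. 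Friedrichs' commutator lemma handles first-order transport coefficients, not the cancellation $\int e^{2bx}z_{xx}z_{xxx}\,dx=-b\int e^{2bx}z_{xx}^2\,dx$ for a fifth-order dispersive operator at two-derivative regularity; justifying that cancellation in the limit of mollifications is precisely the hard analytic content you would need to supply, and it is not known to hold at this regularity. This is why the paper regularizes the data instead. (For completeness: the nonlinear estimates $I_1,I_2$ you import from Theorem \ref{uniq} are fine at the weak level, and your Gronwall step and the continuous-dependence remark would follow once the energy identity is established; the sole obstruction is establishing it.)
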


\begin{proof}
Actually, this proof is provided by Theorem \ref{uniq}. It is
sufficient to approximate the initial function $u_0\in
L^2(\mathcal{S})$ by regular functions $u_{0m}$ in the form:
$$\lim_{m\to\infty}\|u_{0m}-u_0\|=0,$$
where $u_{om}$ satisfies the conditions of Theorem \ref{regsol}.
This guarantees the existence of the unique regular solution to
(2.1)-(2.3) and allows us to repeat all the calculations which have
been done during the proof of Theorem \ref{uniq} and to come to the
following inequality:
\begin{align}
&\frac{d}{dt}(\e,z_m^2)(t)+(\e,| \nabla z_{m}|^2)(t)\nonumber\\
&\leq
C(b)\big[1+\|u_{1m}\|^2(t)+\|u_{2m}\|^2(t)+\|u_{1xm}\|^2(t)+\|u_{2xm}\|^2(t)\big](\e,z_m^2)(t).\nonumber
\end{align}
By the generalized Gronwall`s lemma,
\begin{align}
&(\e,z_m^2)(t)\leq exp\{\int_0^t
C(b)\big[1+\|u_{1m}\|^2(s)+\|u_{2m}\|^2(s)+\|u_{1xm}\|^2(s)\nonumber\\
&+\|u_{2xm}\|^2(s)\big]\,ds\}(\e,z_{0m}^2)(t).\nonumber
\end{align}
Functions $u_{1m}$ and $u_{2m}$ for $m$ sufficiently large satisfy
the estimate
\begin{equation}
\|u_{im}\|^2(t)+2\int_0^t \|u_{imx}\|^2(s)\,ds=\|u_{0m}\|^2\leq
2\|u_0\|^2),\quad i=1,2. \nonumber
\end{equation}

Hence,
\begin{align}
&exp\{\int_0^t
C(b)\big[1+\|u_{1m}\|^2(s)+\|u_{2m}\|^2(s)+\|u_{1xm}\|^2(s)\nonumber\\
&+\|u_{2xm}\|^2(s)\big]\,ds\}\leq C(,T,\|u_0\|).
\end{align}
 Since $e^{bx}z(x,y,t)$ is a weak limit of regular
solutions $\{e^{bx}z_m(x,y,t)\}$, then $$(\e,z^2)(t)\leq (\e,
z_m^2)(t)= 0.$$ This implies $u_1\equiv u_2$\; $a.e.$ in
$\mathcal{S}_T.$ The proof of Theorem \ref{weakuniq} is complete.
\end{proof}
\begin{rem}
Changing initial condition $z(x,y,0)\equiv 0$  for
$z(x,y,0)=z_0(x,y)\ne 0,$ and repeating the proof of Theorem
\ref{weakuniq}, we obtain  that
$$(\e,z^2)(t)\leq C(b,T,\|u_0\|)(\e,z^2_0)\quad \forall t\in(0,T).$$
This means continuous dependence of weak solutions on initial data.
\end{rem}

{\bf  Decay of weak solutions.}

\begin{teo} \label{decay3}
Let  $b\in(0,b_0),\;
\|u_0\|\leq \frac{3\pi}{16B}$ and $u(x,y,t)$ be a regular solution of
\eqref{2.1}-\eqref{2.3}. Then for all finite $B>0$ the following
inequality is true:
\begin{equation}
 \|e^{bx}u\|^2(t)\leq e^{-\chi t}\|e^{bx}u_0\|^2(0),
\label{ë5}
\end{equation}
 where
 $$\chi=\frac{b_0\pi^2}{4B^2},\quad b_0=\min\big(\frac{\sqrt{0,6}}{2},\quad\frac{1}{5}[-1+\sqrt{1+\frac{5\pi^2}{4B^2}}] \big).$$
\end{teo}

\begin{proof}
Similarly to the proof of the uniqueness result for a weak solution,
we approximate $u_0 \in L^2(\mathcal{S})$ by sufficiently smooth
functions $u_{om}$ in order to work with regular solutions. Acting
in the same manner as by the proof of Theorem 4.1, we come to the
following inequality :
\begin{equation}
 \|e^{bx}u_m\|^2(t)\leq e^{-\chi t}\|e^{bx}u_0\|^2(0),
\label{wdecay}
\end{equation}
 where
 $$\chi=\frac{\pi^2}{20B^2}[-1+\sqrt{1+\frac{5\pi^2}{4B^2}}].$$
Since $u(x,y,t)$ is weak limit of regular solutions $\{u_m(x,y,t)\}$
then
$$(\e, u^2)(t)\leq (\e, u_m^2)(t)\leq e^{-\chi t}(\e,u_0^2).$$
The proof of Theorem \ref{decay3} is complete.
\end{proof}

We have in this Theorem a more strict condition  $\|u_0\|\leq
\frac{3\pi}{16B}$ instead of $\|u_0\|\leq \frac{3\pi}{8B}$ in the
case of decay for regular solution because  for weak solutions we do
not have the sharp estimate \eqref{E1l}, but only \eqref{E1}.

\end{document}